\newtheorem{theorem}{Theorem}
\newtheorem{corollary}[theorem]{Corollary}
\newtheorem{definition}[theorem]{Definition}
\newtheorem{remark}[theorem]{Remark}
\newtheorem{lemma}[theorem]{Lemma}
\newtheorem{proposition}[theorem]{Proposition}
\newenvironment{proof}[1][Proof]{\noindent\textbf{Proof of #1.} }{\ \rule{0.5em}{0.5em}}
\newcommand{\norm}[1]{\left\lVert#1\right\rVert}
\newcommand{\abs}[1]{\left\lvert#1\right\rvert}
\DeclareMathOperator{\med}{Med}
\DeclarePairedDelimiterX{\inp}[2]{\langle}{\rangle}{#1, #2}
\newcommand{\eqd}{\ensuremath{\overset{d}{=}}}
\renewcommand{\d}[1]{\ensuremath{\operatorname{d}\!{#1}}}
\begin{document}

\title{Generalization of the simplicial depth: no vanishment outside the convex hull of the distribution support}

\author{Giacomo Francisci, Alicia Nieto-Reyes and Claudio Agostinelli}

\newcommand{\addresses}{{
  \bigskip
  \footnotesize

  G.~Francisci, \textsc{Department of Statistics, George Mason University,
    Fairfax 22030, Virginia, U.S.A.} \textit{E-mail address:} \texttt{gfranci@gmu.edu}

  \medskip

  A.~Nieto-Reyes, \textsc{Department of Mathematics, Statistics and Computation, University of Cantabria, Santander 39005, Spain.} \textit{E-mail address:} \texttt{alicia.nieto@unican.es}

  \medskip

  C.~Agostinelli, \textsc{Department of Mathematics, University of Trento, Trento 38122, Italy.} \textit{E-mail address:} \texttt{claudio.agostinelli@unitn.it}

}}

\date{\today}

\maketitle

\begin{abstract} The simplicial depth, like other relevant multivariate statistical data depth functions, vanishes right outside the convex hull of the support of the distribution with respect to which the depth is computed.  This is problematic when it is required to differentiate among points outside the convex hull of the distribution support, with respect to which the depth is computed, based on their depth values. We provide the first proposal for simplicial depth which do not vanish right outside the convex hull of the distribution. The  properties of the proposal and of the corresponding estimator are studied theoretically and by means of Monte Carlo simulations and analysis of datasets. \\

\noindent \textbf{Keywords}: Classification, consistency, empirical depth, multivariate statistical data depth, multivariate symmetry, vanishment outside the convex hull.
\end{abstract}

\section{Introduction}
\label{section:introduction}

Multivariate statistical data depth functions provide an order of the elements of a space on $\mathbb{R}^p,$ $p\geq 1,$ with respect to a probability distribution on the space. 
\citet{Liu-1990} introduced the simplicial depth as an instance of depth that satisfies some good theoretical properties that later became the constituting properties of the notion of statistical data depth \citep{Zuo-2000}. The simplicial depth of a $p$-dimensional point $x$ with respect to a distribution $P$ is the probability that $x$ is contained in a closed simplex whose vertices are drawn independently from $P$, namely
\begin{equation}\label{S}
d_S(x; P) = \int \mathbf{I}( x \in \triangle[x_{1},\dots,x_{p+1}]) \d P(x_1) \cdots \d P(x_{p+1});
\end{equation}
 with $\triangle[x_1, \dots, x_{p+1}]$ denoting the closed simplex with vertices on $x_1, \dots, x_{p+1}$ and $\mathbf{I}$  the indicator function. Given a random sample $X_1, \dots, X_n$ from $P$, the simplicial depth is estimated by the sample simplicial depth
 \begin{equation}\label{ss}
 d_{S,n}(x; P) = \binom{n}{p+1}^{-1} \sum_{1 \leq i_1 < \dots < i_{p+1} \leq n} \mathbf{I} ( x \in \triangle [ X_{i_1}, \dots, X_{i_{p+1}} ] ).
 \end{equation}
 The simplicial depth has been broadly studied and applied in the literature \citep[for instance]{Arcones-1994,Li-2012}, with consistency of the sample simplicial depth being established in \citet{Liu-1990} and \citet{Arcones-1993}. 

 Other well-known depth functions are Tukey \citep{Tukey-1975} and spatial depth \citep{Vardi-2000, Serfling-2002}. \citet{Girard-2017} proves that the spatial depth is always positive, while this is not the case for neither simplicial  nor Tukey depth. 
 A particularity of simplicial and Tukey depth functions is that they provide zero depth value to the elements of the space that are outside the convex hull of the distribution support with respect to which the depth is computed. 
 Additionally, sample Tukey and sample simplicial depth functions give value zero to every point in the space outside the convex hull of the observed sample. These points are called outsiders \citep{Lange-2014}.  
This is problematic for those applications in which it is required to discriminate among different elements of the space (see the following paragraphs).  
   An approach to tackle this issue is pursued in \citet{Einmahl-2015} for the sample version of the Tukey depth. A more recent attempt to improve the Tukey depth is presented in \citet{Nagy-2021}.  Another existing depth function is zonoid depth \citep{Koshevoy-1997}. For the sample zonoid, an approach is presented in \citet{Mosler-2006}, where, for the outsiders, it is suggested to use the Mahalanobis depth which is positive everywhere. Alternatively, the projection depth \citep{Zuo-2003} and the spatial depth can be used. Additionally, \citet{Lange-2014} suggests to use $k$-nearest neighbors to classify the outsiders. The literature contains no such approaches for the simplicial depth.

Given two samples $X=\{X_1, \ldots, X_m\}$ and $Z=\{Z_1, \ldots, Z_o\},$ drawn respectively from distributions $P^{(1)}$ and $P^{(3)}$ and a third sample $Y=\{Y_1, \ldots, Y_n\}$ drawn from an unknown distribution, say $P^{(2)}$, consider the problem of classifying each element of $Y$ according to its similarity to observations from $P^{(1)}$ or $P^{(3)}.$ 
As an example, we make use of the Alzheimer dataset,  which
was first introduced in \citet{Nieto-Reyes-2017} and later used in \citet{Bringas-2020} and in \citet{Nieto-Reyes-2021-b}. 
 This dataset  consists of recording accelerations for 35 patients at different stages of the Alzheimer's disease: 7 in a mild stage, 18 in a moderate stage and 10 in a severe stage of the disease. There are various acceleration recordings per patient, which vary between 2 and 8 depending on the patient. Each recording is registered at a different grid and for a different amount of time. 

In  \citet{Nieto-Reyes-2021-b} two measurements are used to summarize the acceleration over time: (i) the distance from the center of symmetry \citep{Nieto-Reyes-2021-a} and (ii) the median absolute deviation of the distances restricted to the corresponding stage of the disease. This is illustrated in Figure \ref{figure:alzheimer:dataset}, 
\begin{figure}[!htb]
\begin{center}
\includegraphics[width=0.99\textwidth]{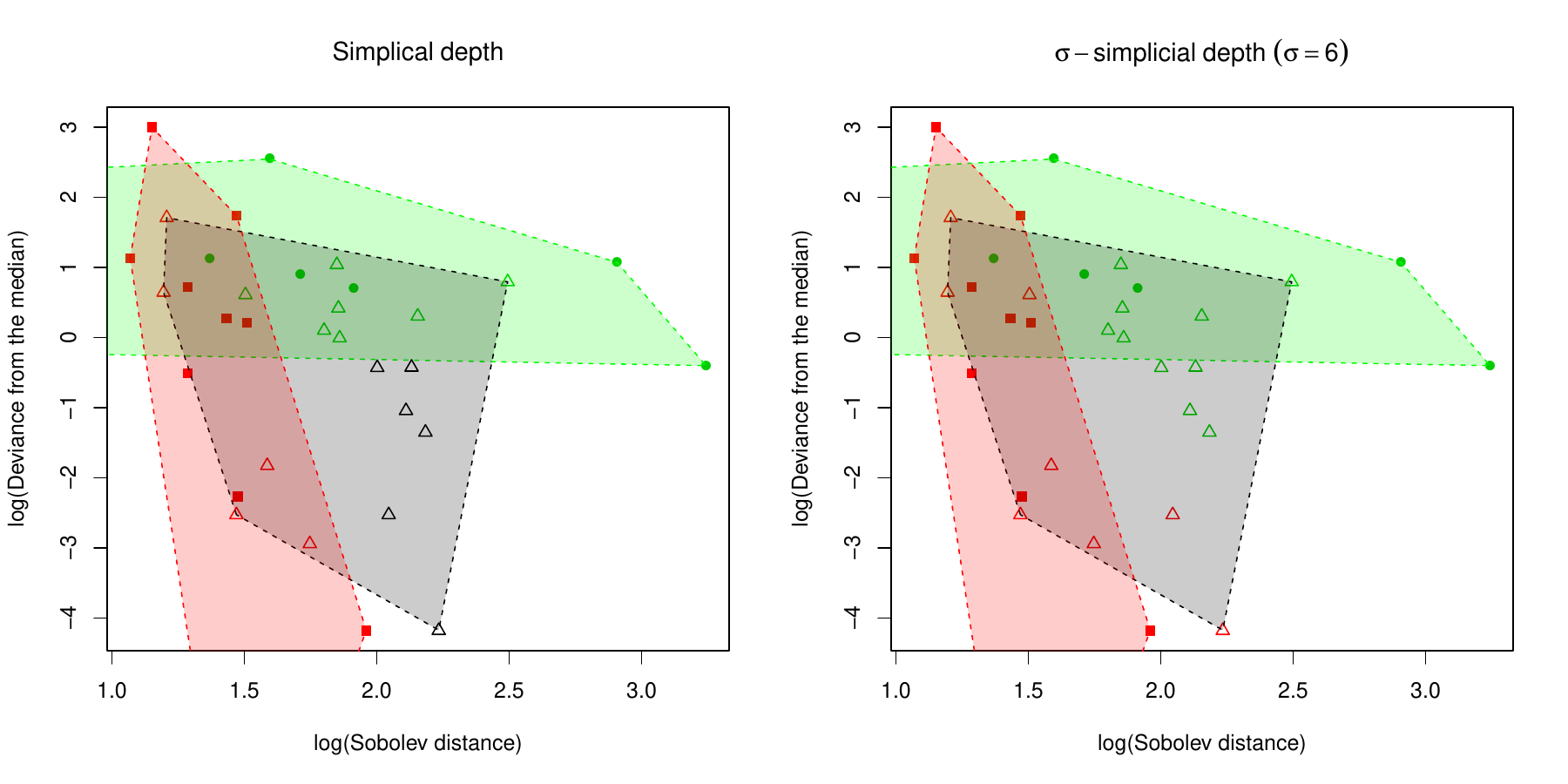}
\end{center}
\caption{Alzheimer dataset. Group 1 (mild stage): filled green circle , group 2 (moderate stage): triangle, group 3 (severe stage): filled red square. Left plot: simplicial depth, right plot: simplex enlarged $\sigma$-simplicial depth with $\sigma=6$. Units in group 2 are classified in group 1 (green) or group 3 (red). Black triangles are unclassified units with zero simplicial depth.}
\label{figure:alzheimer:dataset}
\end{figure}
where the $X$ (mild stage) and $Z$ (severe stage) are plotted respectively as filled green circles and filled red squares, while $Y$ (moderate stage) observations are represented by triangles. Notice that there are elements of $Y$ which are outside both convex hulls of $X$ and $Z.$  If we were to classify the elements of $Y$ using a supervised classification methodology based on statistical depth \citep[see for example,][]{Li-2012}, we would like the depth value of each $Y_i,$ $i=1, \ldots, n,$ with respect to the empirical distribution associated to $X,$ to differ from the value obtained when computed with respect to the empirical distribution associated to $Z.$
However, the sample simplicial of points outside both convex hulls is zero. 

We propose a generalization of simplicial depth which allows to classify all the points, see right panel of Figure \ref{figure:alzheimer:dataset}. Among the $6$ outsiders $4$ are classified as in mild stage and $2$ as in severe stage.
This methodology, that generalizes the simplicial depth, results in depth functions that do not vanish right outside the convex hull of the distribution support. The methodology is based on the idea of enlargement, which is performed in two different ways. The first approach is an enlargement of the simplex involved in the definition of the simplicial depth and the second approach might be seen as an enlargement of the distribution with respect to which the depth is computed; which involves linear combinations of independent random variables. The  enlargement of the simplex is the approach used in the right panel of Figure \ref{figure:alzheimer:dataset}, where $\sigma$ is a parameter controlling the enlargement size.

As multivariate symmetry \citep{Zuo-2000-b} is a key concept in the notion of depth, we dedicate Section \ref{section:symmetry:of:random:variables} to study the conditions under which symmetry is inherited under affine combinations. This has far reaching implications as affine combinations are widely applied in statistics, for instance, in dimension reduction problems. Moreover, the non-symmetry of an affine combination will imply the non-symmetry of the original distribution, under certain assumptions. Section \ref{Simplicial} introduces the two different generalizations of the simplicial depth. The results concerning multivariate symmetry are exploited to derive useful properties for the simplicial depth based on an enlargement of the distribution. We provide sample versions that do not vanish right outside the convex hull of the sample and study their consistency. Section \ref{Simulations} presents some Monte Carlo simulations where the two main contributions are studied empirically. Section \ref{data_analysis} illustrates the performance of the introduced methods on four datasets. The proofs of the results stated in the following sections are provided in the supplementary material.
 
\section{Symmetry of random variables}
\label{section:symmetry:of:random:variables}

We prove under which notions of symmetry the affine combinations of independent and symmetric random variables are symmetric. The most well-known notions of symmetry in the literature are spherical, elliptical, central, angular and halfspace symmetry, where each is a generalization of the previous one \citep{Serfling-2004}. A random variable $X \in \mathbb{R}^p$ is \emph{spherically symmetric} about a point $\mu \in \mathbb{R}^p$ if $X-\mu$ and $U(X-\mu)$ are identically distributed for any orthonormal matrix $U$. A random variable $X$ in $\mathbb{R}^p$ is \emph{elliptically symmetric} about a point $\mu \in \mathbb{R}^p$ if there exists a nonsingular matrix $V$ such that $VX$ is spherically symmetric about $V \mu$ \citep{Ley-2011}. A random variable $X$ in $\mathbb{R}^p$ is \emph{centrally symmetric} about a point $\mu \in \mathbb{R}^p$ if $X-\mu$ and $\mu-X$ are identically distributed. Note that the notions of spherical, elliptical and central symmetry coincide for univariate random variables.
The notion of angular symmetry was introduced in \citet{Liu-1990}: a random variable $X$ in $\mathbb{R}^p$ is \emph{angularly symmetric} about a point $\mu \in \mathbb{R}^p$ if $(X-\mu)/\norm{X-\mu}$ and $(\mu-X)/\norm{X-\mu}$ are identically distributed. 
This was generalized in \citet{Zuo-2000-b} by defining a random variable $X$ in $\mathbb{R}^p$ to be \emph{halfspace symmetric} about $\mu$ if $\mathbb{P}(X \in H) \geq \frac{1}{2}$ for every closed halfspace $H$ with $\mu$ on the boundary. 

Next proposition states that these notions of symmetry are preserved under translation and scalar multiplication.
\begin{proposition} \label{proposition:enlargement:translation}
	Let $X$ be a random variable on $\mathbb{R}^p$ that is symmetric about $\mu \in \mathbb{R}^p$ with respect to either spherical, elliptical, central, angular or halfspace symmetry. 
	Then, for any $\lambda \in \mathbb{R}$ and $b \in \mathbb{R}^p,$ $\lambda X + b$ is symmetric about $\lambda \mu + b$ with respect to the same notion of symmetry.
\end{proposition}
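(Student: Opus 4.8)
The plan is to handle the five notions of symmetry in turn, treating the case $\lambda = 0$ separately since then $\lambda X + b = b$ is trivially symmetric about $b = \lambda\mu + b$ in every sense, and for $\lambda \neq 0$ reducing everything to the defining distributional identities. Throughout I would write $Y = \lambda X + b$ and $\nu = \lambda\mu + b$, so that $Y - \nu = \lambda(X - \mu)$, which is the key algebraic identity that makes all five arguments go through.

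\textbf{Central symmetry.} This is the easiest: from $Y - \nu = \lambda(X-\mu)$ and $\nu - Y = \lambda(\mu - X) = (-\lambda)(X-\mu)$, and using that $X - \mu \eqd \mu - X$, one gets $Y - \nu \eqd \lambda(\mu - X) = \nu - Y$ directly (multiplying both sides of an identity in distribution by the fixed scalar $\lambda$ preserves it). \textbf{Angular symmetry.} Here I would write $\frac{Y-\nu}{\norm{Y-\nu}} = \frac{\lambda(X-\mu)}{\abs{\lambda}\,\norm{X-\mu}} = \operatorname{sign}(\lambda)\,\frac{X-\mu}{\norm{X-\mu}}$, and similarly for $\frac{\nu - Y}{\norm{Y-\nu}} = \operatorname{sign}(\lambda)\,\frac{\mu-X}{\norm{X-\mu}}$; the hypothesis $\frac{X-\mu}{\norm{X-\mu}} \eqd \frac{\mu-X}{\norm{X-\mu}}$ then gives the claim after multiplying by the fixed sign. \textbf{Halfspace symmetry.} A closed halfspace $H$ with $\nu$ on its boundary is $H = \{z : \inp{a}{z - \nu} \geq 0\}$ for some $a \neq 0$; then $Y \in H \iff \inp{a}{\lambda(X-\mu)} \geq 0 \iff \inp{\lambda a}{X - \mu} \geq 0$, which describes a closed halfspace with $\mu$ on its boundary (with normal $\lambda a \neq 0$). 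Hence $\mathbb{P}(Y \in H) = \mathbb{P}(X \in H')\geq \tfrac12$ by hypothesis.

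\textbf{Spherical and elliptical symmetry.} For spherical symmetry, I must check that $Y - \nu = \lambda(X-\mu)$ satisfies $U(Y-\nu) \eqd Y - \nu$ for every orthonormal $U$; but $U(Y-\nu) = \lambda\, U(X-\mu) \eqd \lambda(X-\mu) = Y - \nu$ using that $U(X-\mu)\eqd X-\mu$ and scaling by the fixed $\lambda$. For elliptical symmetry, by definition there is a nonsingular $V$ with $V X$ spherically symmetric about $V\mu$; then $V Y = \lambda(VX) + Vb$, and since spherical symmetry is preserved under the translation-and-scaling just proved, $VY$ is spherically symmetric about $\lambda(V\mu) + Vb = V(\lambda\mu + b) = V\nu$, so $Y$ is elliptically symmetric about $\nu$ with the same matrix $V$. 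I expect no serious obstacle here; the only points requiring a line of care are the $\lambda = 0$ degenerate case (where $\norm{Y-\nu} = 0$ makes the angular-symmetry ratio undefined, so it genuinely must be excluded from that argument and dispatched separately) and being explicit that an identity in distribution is preserved under applying a fixed deterministic map (multiplication by $\lambda$, or by $U$, or by $V$) to both sides.
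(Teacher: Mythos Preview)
Your proof is correct. The approach differs from the paper's in a few places worth noting. For angular symmetry, the paper does not work with the unit vector $\frac{X-\mu}{\norm{X-\mu}}$ directly but instead invokes the equivalent halfspace characterization of angular symmetry from \citet[Theorem~2.2]{Zuo-Serfling-2000-b}, namely $\mathbb{P}(u^{\top}(X-\mu)\geq 0)=\mathbb{P}(u^{\top}(\mu-X)\geq 0)$ for all $u\in S^{p-1}$, and then argues by manipulating these events. For halfspace symmetry, the paper again avoids the raw definition and uses the median characterization $\med(u^{\top}X)=u^{\top}\mu$ from \citet[Theorem~2.4]{Zuo-Serfling-2000-b}, together with the homogeneity of the median. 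For the spherical, elliptical, and central cases the paper simply points to the $n=1$ case of its Propositions~\ref{proposition:elliptical:symmetry} and~\ref{proposition:spherical:central:symmetry}. Your route is more self-contained, needing no external characterizations, and your handling of the elliptical case (reusing the same matrix $V$ after applying the spherical case to $VX$) is essentially the same idea the paper uses. Both arguments are of comparable length; yours has the minor advantage of not depending on the cited equivalences, while the paper's route makes the angular and halfspace cases look uniformly like one-dimensional statements.
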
 
If a distribution is spherically, elliptically or centrally symmetric, the center of symmetry is unique. If the distribution is angular or halfspace symmetric, the center is unique but for the degenerate case in which the distribution on $\mathbb{R}^p,$ $p>1,$ has all its probability mass on a line with more than one median \citep[Theorem 2.1, Lemma 2.3]{Zuo-2000-b}.
Note that, when the center of symmetry is not unique, Proposition \ref{proposition:enlargement:translation} remains valid for each center of symmetry.  

Next two results concern the inheritance under affine combinations of spherical, elliptical and central symmetry and, in general, they do not hold for angular and halfspace symmetry.
\begin{proposition} \label{proposition:elliptical:symmetry}
Let $X_1, \ldots, X_n$ be independent and identically distributed random variables on $\mathbb{R}^p$ that are symmetric about $\mu \in \mathbb{R}^p$ with respect to either spherical, elliptical or central symmetry. 
For any $\lambda_1, \ldots, \lambda_n \in \mathbb{R}$ and $b \in \mathbb{R}^p$ then $\sum_{i=1}^{n} \lambda_i X_i + b$ is symmetric about $\sum_{i=1}^{n} \lambda_i \mu + b$ with respect to the same notion of symmetry.
\end{proposition}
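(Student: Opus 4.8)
The plan is to reduce everything to the centered case and then exploit independence together with the behaviour of linear maps. Write $\nu := \sum_{i=1}^{n}\lambda_i\mu + b$ and $Z_i := X_i-\mu$, so that the $Z_i$ are i.i.d.\ and symmetric about the origin in the relevant sense, and $\sum_{i=1}^{n}\lambda_i X_i + b - \nu = \sum_{i=1}^{n}\lambda_i Z_i$. Hence it is enough to show that $W:=\sum_{i=1}^{n}\lambda_i Z_i$ is symmetric about the origin (spherically, elliptically, or centrally, matching the hypothesis); Proposition~\ref{proposition:enlargement:translation} (or simply a translation) then transfers the conclusion to $\sum_{i=1}^{n}\lambda_i X_i + b$ about $\nu$. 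For central symmetry this is immediate: each $Z_i \eqd -Z_i$, and by independence the joint law of $(Z_1,\dots,Z_n)$ is the product of the marginal laws, so it coincides with the joint law of $(-Z_1,\dots,-Z_n)$; applying the map $(z_1,\dots,z_n)\mapsto\sum_i\lambda_i z_i$ gives $W \eqd \sum_i\lambda_i(-Z_i) = -W$.

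For spherical symmetry, fix an arbitrary orthonormal matrix $U$. Since each $Z_i$ is spherically symmetric about the origin, $UZ_i \eqd Z_i$, and again by independence $(UZ_1,\dots,UZ_n) \eqd (Z_1,\dots,Z_n)$; applying $(z_1,\dots,z_n)\mapsto\sum_i\lambda_i z_i$ and using linearity of $U$ yields $UW = \sum_i\lambda_i UZ_i \eqd \sum_i\lambda_i Z_i = W$. As $U$ was arbitrary, $W$ is spherically symmetric about the origin. (An equivalent route is via characteristic functions: spherical symmetry of the common law of the $Z_i$ means its characteristic function $\phi$ satisfies $\phi(t)=\psi(\norm{t})$ for some $\psi$, and then the characteristic function of $W$, namely $t\mapsto\prod_i\phi(\lambda_i t)=\prod_i\psi(\abs{\lambda_i}\norm{t})$, again depends on $t$ only through $\norm{t}$.)

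For elliptical symmetry, the key point is that, because the $X_i$ are identically distributed, a single nonsingular matrix $V$ witnesses the elliptical symmetry of all of them simultaneously, i.e.\ $VZ_i$ is spherically symmetric about the origin for every $i$. The vectors $VZ_i$ are then i.i.d.\ and spherically symmetric about the origin, so by the previous case $\sum_i\lambda_i(VZ_i) = V W$ is spherically symmetric about the origin; hence $W$ is elliptically symmetric about the origin, with the same matrix $V$ serving as witness. This last step is the only genuinely delicate one: it is essential that the variables share a distribution so that one matrix $V$ ``spherizes'' all of them at once. Everything else is bookkeeping with independence and linear maps; I would also note in passing that the central and spherical parts require only independence and symmetry about the common center $\mu$, not identical distribution.
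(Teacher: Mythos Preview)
Your proof is correct and follows essentially the same approach as the paper: for the elliptical case you exploit that the i.i.d.\ assumption provides a single nonsingular $V$ that simultaneously spherizes all the $Z_i$, reducing to the spherical case, and for the spherical and central cases you use independence together with the defining invariance of each symmetry notion. The paper organizes things slightly differently---it treats only the elliptical case under Proposition~\ref{proposition:elliptical:symmetry} and defers the spherical and central cases to the proof of Proposition~\ref{proposition:spherical:central:symmetry}---but the mathematical content is the same, and your closing remark that the spherical and central arguments need only independence (not identical distribution) is exactly the content of that next proposition.
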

The above proposition is generalized below to non-identically distributed random variables for the notions of spherical and central symmetry.
\begin{proposition} \label{proposition:spherical:central:symmetry}
Let $X_1, \ldots, X_n$ be independent random variables on $\mathbb{R}^p.$ For either spherical or central symmetry,  let $X_i$ be symmetric about $\mu_i \in \mathbb{R}^p$ for all $i=1, \ldots, n.$
For any $\lambda_1, \ldots, \lambda_n \in \mathbb{R}$ and $b \in \mathbb{R}^p$ then $\sum_{i=1}^{n} \lambda_i X_i + b$ is symmetric about $\sum_{i=1}^{n} \lambda_i \mu_i + b$ with respect to the same notion of symmetry.
\end{proposition}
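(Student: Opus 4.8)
The plan is to reduce the statement to the centered case and then transfer the symmetry of the individual summands to their sum by exploiting independence. By Proposition \ref{proposition:enlargement:translation}, for each $i$ the variable $Y_i := \lambda_i X_i - \lambda_i \mu_i = \lambda_i(X_i - \mu_i)$ is symmetric about $0$ with respect to the same notion (spherical or central) as $X_i$; moreover $\sum_{i=1}^{n} \lambda_i X_i + b = \bigl(\sum_{i=1}^{n} Y_i\bigr) + \bigl(\sum_{i=1}^{n} \lambda_i \mu_i + b\bigr)$. Hence, applying Proposition \ref{proposition:enlargement:translation} once more with scalar $1$ and translation vector $\sum_{i=1}^{n} \lambda_i \mu_i + b$, it suffices to prove that $S := \sum_{i=1}^{n} Y_i$ is symmetric about $0$, where $Y_1, \ldots, Y_n$ are independent and each is symmetric about $0$ in the relevant sense.

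For the core step I would argue at the level of the joint law on $(\mathbb{R}^p)^n$. Since $Y_1, \ldots, Y_n$ are independent, the law of the vector $(Y_1, \ldots, Y_n)$ is the product of the marginal laws. In the central case, $Y_i \eqd -Y_i$ for every $i$, so $(Y_1, \ldots, Y_n) \eqd (-Y_1, \ldots, -Y_n)$; pushing both sides forward through the continuous summation map $(y_1, \ldots, y_n) \mapsto \sum_{i=1}^{n} y_i$ gives $S \eqd -S$, which is exactly central symmetry of $S$ about $0$. In the spherical case, fix an arbitrary orthonormal $p \times p$ matrix $U$; then $Y_i \eqd U Y_i$ for every $i$, hence $(Y_1, \ldots, Y_n) \eqd (U Y_1, \ldots, U Y_n)$, and pushing forward through the same map yields $S \eqd \sum_{i=1}^{n} U Y_i = U S$. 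As $U$ was arbitrary, $S$ is spherically symmetric about $0$. Combining with the reduction above completes the argument.

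The one point that deserves care is the passage from ``each marginal $Y_i$ equals in distribution its image $T(Y_i)$'' (with $T$ either $y \mapsto -y$ or $y \mapsto Uy$) to ``the whole vector $(Y_1, \ldots, Y_n)$ equals in distribution $(T(Y_1), \ldots, T(Y_n))$'': this is precisely where independence enters, since a product measure is determined by its marginals. Everything else is formal. It is worth noting why this does not extend to elliptical symmetry once the $X_i$ are allowed to be non-identically distributed: the nonsingular matrix $V$ in the definition of elliptical symmetry depends on the distribution, so distinct summands may require distinct $V_i$ and there need not be a common nonsingular matrix spherizing $\sum_{i=1}^{n} \lambda_i X_i$ — which is exactly why Proposition \ref{proposition:spherical:central:symmetry} is restricted to spherical and central symmetry, unlike Proposition \ref{proposition:elliptical:symmetry}.
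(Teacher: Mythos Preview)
Your argument is correct and follows essentially the same idea as the paper's: pass to the centered variables $\lambda_i(X_i-\mu_i)$, use independence to transfer the coordinatewise distributional symmetry to the sum, and read off the conclusion. Two minor differences are worth noting. First, for central symmetry the paper invokes the one-dimensional projection characterization of \citet[Lemma 2.1]{Zuo-Serfling-2000-b}, i.e.\ $u^{\top}(X-\mu)\eqd u^{\top}(\mu-X)$ for all $u\in S^{p-1}$, whereas you work directly with $S\eqd -S$; both are equivalent and equally short. Second, be aware that in this paper's organization the proof of Proposition~\ref{proposition:enlargement:translation} for spherical and central symmetry is \emph{deferred} to the $n=1$ case of Propositions~\ref{proposition:elliptical:symmetry} and~\ref{proposition:spherical:central:symmetry}, so citing Proposition~\ref{proposition:enlargement:translation} inside the proof of Proposition~\ref{proposition:spherical:central:symmetry} is formally circular. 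This is harmless mathematically --- the $n=1$ reduction you need ($\lambda X$ symmetric about $\lambda\mu$ and translation invariance) is a one-line verification from the definitions --- but you should either prove that step inline or note that only the trivial $n=1$ instance is being used.
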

The result in Proposition \ref{proposition:spherical:central:symmetry} does not hold for elliptically symmetric distributions. To see this consider two multivariate random variables $X_1$, $X_2$, which are elliptically symmetric with means $\mu_1$, $\mu_2$ and covariance matrices $\Sigma_1$, $\Sigma_2$. By \citet{Frahm-2004}[Proposition 1, Theorem 2], they have characteristic functions $\varphi_{X_1}(x)=\exp(i x^{\top} \mu_1) \, \phi_1(x^{\top} \Sigma_1 x)$, $\varphi_{X_2}(x)=\exp(i x^{\top} \mu_2) \, \phi_2(x^{\top} \Sigma_2 x)$ for some functions $\phi_1,\phi_2$. Now, the sum $X_1+X_2$ has characteristic function $\varphi_{X_1+X_2}(x) = \varphi_{X_1}(x) \, \varphi_{X_2}(x)$, and it does not fall back in the previous form unless $\phi_1(s) \, \phi_2(t)=\phi(s+t)$, for some function $\phi$. This is true for normal distributions, in which case $\phi_1$, $\phi_2$ have exponential form, but it is not true in general. 

For elliptically symmetric non-identically distributed random variables, we have the following corollary of Proposition \ref{proposition:spherical:central:symmetry}. This is due to the facts that Proposition \ref{proposition:spherical:central:symmetry} holds for central symmetry and that central symmetry  is a generalization of elliptical symmetry.

\begin{corollary}\label{C4}
Let $X_1, \ldots, X_n$ be as in Proposition \ref{proposition:spherical:central:symmetry}, but with $X_i$ elliptically symmetric about $\mu_i \in \mathbb{R}^p$ for each $i=1, \ldots, n.$ Then,  $\sum_{i=1}^{n} \lambda_i X_i + b$ is centrally symmetric about $\sum_{i=1}^{n} \lambda_i \mu_i + b$ for any $\lambda_1, \ldots, \lambda_n \in \mathbb{R}$ and $b \in \mathbb{R}^p$.
\end{corollary}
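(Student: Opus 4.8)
The plan is to deduce Corollary \ref{C4} from Proposition \ref{proposition:spherical:central:symmetry} by observing that elliptical symmetry is nothing but central symmetry once one forgets the extra structure. Concretely, the first step is to recall that every elliptically symmetric random vector is in particular centrally symmetric about the same center. Indeed, if $X_i$ is elliptically symmetric about $\mu_i$, there is a nonsingular matrix $V_i$ with $V_i X_i$ spherically symmetric about $V_i \mu_i$; spherical symmetry implies (taking $U = -I$, which is orthonormal) that $V_i X_i - V_i \mu_i$ and $V_i \mu_i - V_i X_i$ are identically distributed, hence applying $V_i^{-1}$ gives that $X_i - \mu_i$ and $\mu_i - X_i$ are identically distributed, i.e. $X_i$ is centrally symmetric about $\mu_i$. (Alternatively this is immediate from the characteristic-function form $\varphi_{X_i}(x) = \exp(i x^{\top}\mu_i)\,\phi_i(x^{\top}\Sigma_i x)$ quoted just above, since $x \mapsto x^{\top}\Sigma_i x$ is even.)

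The second step is then purely an invocation: the random variables $X_1, \ldots, X_n$ are independent and, by Step 1, each $X_i$ is centrally symmetric about $\mu_i$, so the hypotheses of Proposition \ref{proposition:spherical:central:symmetry} are met for the notion of central symmetry. Applying that proposition with the given scalars $\lambda_1, \ldots, \lambda_n$ and shift $b$ yields directly that $\sum_{i=1}^{n} \lambda_i X_i + b$ is centrally symmetric about $\sum_{i=1}^{n} \lambda_i \mu_i + b$, which is exactly the assertion. So the only real content is the implication ``elliptically symmetric $\Rightarrow$ centrally symmetric about the same center''; everything else is bookkeeping.

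I do not expect a genuine obstacle here, but the one point to be careful about is the handling of the nonsingular matrix $V_i$: one must check that conjugating the spherical-symmetry relation by $V_i^{-1}$ really does preserve equality in distribution (it does, since $X \eqd Y$ implies $AX \eqd AY$ for any fixed matrix $A$), and one should make sure the center transforms correctly, i.e. that $V_i \mu_i$ is indeed the center of spherical symmetry of $V_i X_i$, which is part of the definition of elliptical symmetry adopted in the paper. A secondary subtlety worth a sentence is uniqueness of the center: for centrally symmetric distributions the center is unique, so there is no ambiguity in the statement; this was already noted after Proposition \ref{proposition:enlargement:translation}.

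Finally, it is worth remarking — though not strictly needed for the proof — why one cannot hope for more: the conclusion is only central, not elliptical, symmetry, and the discussion preceding the corollary (via the failure of $\phi_1(s)\,\phi_2(t) = \phi(s+t)$ outside the Gaussian case) shows this is sharp. Thus the corollary is the best general statement obtainable from Proposition \ref{proposition:spherical:central:symmetry} in the elliptical setting, and the proof is complete once Step 1 and Step 2 are assembled.
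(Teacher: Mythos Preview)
Your proposal is correct and follows exactly the same route as the paper: the paper's proof is the single sentence ``The proof follows directly from Proposition \ref{proposition:spherical:central:symmetry} as elliptical symmetry implies central symmetry,'' and your Steps 1 and 2 simply unpack this implication and invocation in more detail.
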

 
\section{Generalization of the simplicial depth}
\label{Simplicial}

The objective of this section is to modify the simplicial depth, in \eqref{S},  in a manner that  the points in $\mathbb{R}^p$ that are outside the convex hull of the  support of $P$ do not necessarily have depth value zero, when the depth is computed with respect to $P.$ We pursue it in two different manners: Definition \ref{sigma:simplicial:depth:dependent:vertices} uses an enlargement of the simplex and Definition \ref{sigma:simplicial:depth:independent:vertices} computes the simplicial depth with respect to a transformation of the original distribution with respect to which the depth is evaluated.

\begin{definition}
\label{sigma:simplicial:depth:dependent:vertices}
Given $\sigma> 0$, the simplex enlarged \emph{$\sigma$-simplicial depth} of a point $x \in \mathbb{R}^p$ with respect to a distribution $P$ on $\mathbb{R}^p$ is 
\begin{equation*}
d_{\triangle_\sigma}(x; P) := \int \mathbf{I}( x \in \triangle_\sigma[x_{1},\ldots,x_{p+1}]) \d P(x_1) \cdots \d P(x_{p+1}),
\end{equation*}
where $\triangle_\sigma[x_{1},\ldots,x_{p+1}] := \triangle[y_1,\ldots,y_{p+1}]$ with $y_i = \sigma (x_i - \bar{x}) + \bar{x}$ and $\bar{x} := \sum_{j=1}^{p+1} x_j/(p+1).$

\end{definition}
For $\sigma=1$ there is no enlargement of the simplex and we are left with the simplicial depth. The case $0 < \sigma < 1$ corresponds to a reduction of the simplex, whereas for $\sigma=0$ the simplex degenerates into its centroid $\bar{x}$. 

\begin{definition}\label{Psigma}
Given $P$ a distribution on $\mathbb{R}^p$ and $\sigma > 0,$ the \emph{$\sigma$-transformation} of $P,$ $P_{\sigma},$ is the distribution of the random variable $\sigma (X_1 - \bar{X}) + \bar{X},$  where $X_1, \ldots, X_{p+1}$ are independent and identically distributed random variables with distribution $P$ and $\bar{X} := \sum_{j=1}^{p+1} X_j/(p+1).$ 
\end{definition}
We turn to the definition of the simplicial depth with respect to a transformation of the original distribution.
\begin{definition}\label{sigma:simplicial:depth:independent:vertices}
Given $\sigma> 0,$ the distribution enlarged \emph{$\sigma$-simplicial depth} of a point $x \in \mathbb{R}^p$ with respect to a distribution $P$ on $\mathbb{R}^p$ is 
$$d_{P_\sigma}(x; P):= d_S(x; P_\sigma) = \int \mathbf{I}( x\in\triangle[x_{1},\dots,x_{p+1}]) \d P_\sigma(x_1) \cdots \d P_\sigma(x_{p+1}).$$
\end{definition}
We note that the simplex enlarged $\sigma$-simplicial depth is a refinement of the simplicial depth that makes use of dependent vertices in the simplex while the distribution enlarged $\sigma$-simplicial depth benefits from independent vertices in the simplex.
It is also worth highlighting that the proposed $\sigma$-simplicial depths can have depth value zero outside a region that depends on the selected $\sigma.$ However, this does not pose any problems as it is always possible to choose $\sigma$ such that a region of interest has positive depth value.


Before studying the theoretical properties of the two types of $\sigma$-simplicial depth, we examine the inheritance of regularity conditions from $P$ to $P_{\sigma}$. For this we make use of some of the results in Section \ref{section:symmetry:of:random:variables}. These conditions will be useful in proving most of the properties of the distribution enlarged $\sigma$-simplicial depth. We recall that a distribution $P$ is \emph{smooth}  if 	$P(L) = 0$ for any hyperplane $L \subset \mathbb{R}^p$ \citep{Masse-2004}.

\begin{proposition} \label{PSigmaCont}
Let $P$ be a distribution on $\mathbb{R}^p$ and $\sigma > 0.$ If $P$ is a continuous distribution (or respectively absolutely continuous or smooth), the $\sigma$-transformation of $P,$ $P_{\sigma},$ is a continuous distribution (or respectively absolutely continuous or smooth).
\end{proposition}
The following result concerns the inheritance of symmetry that  $P_{\sigma}$ acquires  from $P.$
\begin{proposition} \label{closure:F:sigma:under:symmetry}
Let $P$ be a distribution on $\mathbb{R}^p$ and $\sigma > 0.$ If $P$ is either spherical, elliptical or centrally symmetric about $\mu\in\mathbb{R}^p,$ then $P_{\sigma}$ is symmetric about $\mu$ with respect to the same notion of symmetry.
 \end{proposition}
 \begin{remark}
If the mean of a spherically, elliptically or centrally symmetric distribution exists, then it coincides with the center of symmetry and, by Proposition \ref{closure:F:sigma:under:symmetry}, it also the mean of $P_{\sigma}$. If $P$ has covariance matrix $\Sigma$ then, by Definition \ref{Psigma}, $P_{\sigma}$ has covariance matrix $\left( \sigma^2 + \frac{1-\sigma^2}{p+1} \right) \Sigma.$ For instance, if $P \sim \mathbf{N}(\mu, \Sigma)$, then $P_{\sigma} \sim \mathbf{N} \left( \mu, \left( \sigma^2 + \frac{1-\sigma^2}{p+1} \right) \Sigma \right).$
 \end{remark}

\subsection{Properties of the $\sigma$-simplicial depth}

 We study the properties of the simplex and distribution enlarged $\sigma$-simplicial depths. Proposition \ref{continuity:of:sigma:depth:dependent:vertices:wrt:sigma} examines them as a function of $\sigma$ while Theorems \ref{PropertiesS1} and \ref{PropertiesS2} concern a fixed $\sigma.$ The studied properties are those commonly studied for depth functions. In particular, affine invariance, maximality, monotonicity and vanishing at infinity, 
 which constitute the notion of multivariate statistical data depth \citep{Zuo-2000} and other desirable properties such as continuity, or merely upper-semicontinuity.

\begin{proposition} \label{continuity:of:sigma:depth:dependent:vertices:wrt:sigma}
Let $x \in \mathbb{R}^p.$
(i) Given a smooth distribution $P,$ $d_{\triangle_\sigma}(x; P)$ and $d_{P_{\sigma}}(x; P)$ are continuous as a function of $\sigma$. 
(ii) Given $P$ elliptically symmetric, $d_{P_{\sigma}}(x; P)$ is monotonically nondecreasing as a function of $\sigma.$ 
(iii) For general $P$, $d_{\triangle_\sigma}(x; P)$ is monotonically nondecreasing and continuous on the right as a function of $\sigma.$ 
\end{proposition}
(i) in the result above ensures that for any smooth $P,$ when $\sigma$ gets close to 1, the $\sigma$-simplicial depths have a similar behavior to that of the simplicial depth, and that their behavior varies in a continuous manner with $\sigma$.

Next results provide some fundamental properties of $\sigma$-simplicial depths. Upper semicontinuity is important in the last result of this subsection, while the continuity of the depth assures similar depth values for points close to each other.
\begin{theorem}\label{PropertiesS1}
The simplex enlarged $\sigma$-simplicial depth is (i) affine invariant, (ii) vanishes at infinity and is (iii) upper semicontinuous as a function of $x.$ Additionally, when computed with respect to a smooth distribution, the  simplex enlarged $\sigma$-simplicial depth is  (iv) continuous as a function of $x.$
\end{theorem}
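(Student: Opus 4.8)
The plan is to check the four items by adapting Liu's arguments for the simplicial depth, the extra work being to track the enlargement map. Throughout, write $Y_i := \sigma x_i + \tfrac{1-\sigma}{p+1}\sum_{j=1}^{p+1}x_j$ so that $\triangle_\sigma[x_1,\dots,x_{p+1}]=\triangle[Y_1,\dots,Y_{p+1}]$, and record two elementary facts to use repeatedly: the common centroid is preserved, $\tfrac1{p+1}\sum_i Y_i=\tfrac1{p+1}\sum_i x_i$, and the vertex differences are merely rescaled, $Y_i-Y_j=\sigma(x_i-x_j)$.

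For (i), I would note that the vertex map commutes with affine maps: for nonsingular $A$ and $b\in\mathbb{R}^p$, replacing each $x_i$ by $Ax_i+b$ turns $Y_i$ into $AY_i+b$, a one-line computation using $\sum_j(Ax_j+b)=A\sum_jx_j+(p+1)b$. Hence $\triangle_\sigma[Ax_1+b,\dots,Ax_{p+1}+b]=A\,\triangle_\sigma[x_1,\dots,x_{p+1}]+b$, so $x\in\triangle_\sigma[x_1,\dots,x_{p+1}]$ if and only if $Ax+b\in\triangle_\sigma[Ax_1+b,\dots,Ax_{p+1}+b]$; integrating and using the change of variables $X_i\mapsto AX_i+b$ gives $d_\triangle(Ax+b;P_{A,b})=d_\triangle(x;P)$, where $P_{A,b}$ is the law of $AX+b$ with $X\sim P$. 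For (ii), if $x=\sum_i\lambda_iY_i$ with $\lambda_i\ge0$ and $\sum_i\lambda_i=1$, then $\norm{x}\le\max_i\norm{Y_i}\le\sigma\max_i\norm{x_i}+(\sigma-1)\max_i\norm{x_i}=(2\sigma-1)\max_i\norm{x_i}$; thus $x\in\triangle_\sigma[X_1,\dots,X_{p+1}]$ forces $\max_i\norm{X_i}\ge\norm{x}/(2\sigma-1)$, and a union bound gives $d_\triangle(x;P)\le(p+1)\,\mathbb{P}\bigl(\norm{X_1}\ge\norm{x}/(2\sigma-1)\bigr)\to0$ as $\norm{x}\to\infty$.

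For (iii), the key observation is that for fixed $x_1,\dots,x_{p+1}$ the set $\triangle_\sigma[x_1,\dots,x_{p+1}]$ is compact, hence closed, so $y\mapsto\mathbf{I}(y\in\triangle_\sigma[x_1,\dots,x_{p+1}])$ is upper semicontinuous and $\limsup_n\mathbf{I}(x_n\in\triangle_\sigma[x_1,\dots,x_{p+1}])\le\mathbf{I}(x\in\triangle_\sigma[x_1,\dots,x_{p+1}])$ whenever $x_n\to x$; since all integrands are dominated by $1$, the reverse Fatou lemma yields $\limsup_n d_\triangle(x_n;P)\le d_\triangle(x;P)$. For (iv) I would combine (iii) with lower semicontinuity, for which Fatou's lemma reduces matters to showing that $P^{\otimes(p+1)}$ assigns mass zero to $B_x:=\{(x_1,\dots,x_{p+1}):x\in\partial\triangle_\sigma[x_1,\dots,x_{p+1}]\}$, the whole simplex being regarded as its own boundary in the degenerate case; indeed, off $B_x$ one has $\liminf_n\mathbf{I}(x_n\in\triangle_\sigma[x_1,\dots,x_{p+1}])\ge\mathbf{I}(x\in\triangle_\sigma[x_1,\dots,x_{p+1}])$ for every $x_n\to x$. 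Since every boundary point of a simplex lies in the affine hull of $p$ of its vertices, $B_x\subseteq\bigcup_{k=1}^{p+1}\{x\in\mathrm{aff}(\{Y_i:i\ne k\})\}$, and it suffices to bound each term. Fixing $k$ and writing $c:=(1-\sigma)/(p+1)\ne0$, one has $Y_i=\bigl(\sigma x_i+c\sum_{j\ne k}x_j\bigr)+c\,x_k$ for $i\ne k$, so $\mathrm{aff}(\{Y_i:i\ne k\})$ is the translate by $c\,x_k$ of an affine subspace $F_k$ of dimension at most $p-1$ depending only on $\{x_i:i\ne k\}$. Conditioning on $\{X_i:i\ne k\}$, the event $x\in\mathrm{aff}(\{Y_i:i\ne k\})$ becomes the event that $X_k$ lies in the fixed affine subspace $c^{-1}(x-F_k)$ of dimension $\le p-1$, which is contained in a hyperplane and hence has probability zero because $P$ is smooth; integrating out the conditioning gives $P^{\otimes(p+1)}(B_x)=0$, which completes (iv).

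The only genuinely delicate point is (iv): one must check that after conditioning on all but one sample point the relevant affine hull really does degenerate into a coset of a subspace of dimension at most $p-1$ that the last point is required to hit, and this is exactly where the identities $\tfrac1{p+1}\sum_iY_i=\tfrac1{p+1}\sum_ix_i$ and $Y_i-Y_j=\sigma(x_i-x_j)$, together with $\sigma\ne1$, are needed; the remaining items are routine adaptations of the classical argument for the simplicial depth, with measurability of the integrands inherited from that setting.
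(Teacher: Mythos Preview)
Your proof is correct and, for parts (i) and (iii), follows the paper's argument essentially verbatim (affine commutation of the vertex map; reverse Fatou on the closed-simplex indicator). Two points deserve a short comparison.

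For (ii), the paper simply pushes the $\limsup$ inside the integral via dominated convergence and observes that the pointwise $\limsup$ of the indicator is zero. Your route is more explicit: you bound $\max_i\norm{Y_i}\le(2\sigma-1)\max_i\norm{x_i}$, deduce $d_\triangle(x;P)\le(p+1)\mathbb P(\norm{X_1}\ge\norm{x}/(2\sigma-1))$, and let $\norm{x}\to\infty$. Both are valid; yours gives a quantitative tail bound the paper does not record, at the cost of a small computation.

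For (iv), the paper writes only the Fatou inequality $\liminf_{x\to x^*}d_\triangle(x;P)\ge\int\liminf\mathbf I(\cdots)=d_\triangle(x^*;P)$, leaving implicit why the $P^{\otimes(p+1)}$-null boundary set may be discarded. You actually supply this: decomposing $\partial\triangle_\sigma$ into the $p+1$ facet affine hulls, conditioning on $\{X_i:i\ne k\}$, and using $c=(1-\sigma)/(p+1)\ne0$ to see that the remaining variable $X_k$ is forced into a fixed affine subspace of dimension at most $p-1$, which has $P$-measure zero by smoothness. This is exactly the missing step, and your conditioning argument is clean; the identity $Y_i=\bigl(\sigma x_i+c\sum_{j\ne k}x_j\bigr)+c\,x_k$ is the right way to isolate $x_k$. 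So on (iv) your argument is strictly more complete than the paper's.
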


The simplex enlarged $\sigma$-simplicial depth does not satisfy the maximality at the center property nor it is monotonically decreasing along rays from this point even for spherically symmetric distributions. However, for $p=1$ it is monotone for the points outside the convex hull of the support.

\begin{proposition} \label{proposition:monotonicityoutsidesupport}
Let $\sigma > 0,$ $P$ a distribution function on $\mathbb{R}$ and $S$ the convex hull of the support of $P.$ If $S \subset [a,b],$ $a \leq b$ then for either $x \leq y \leq a$ or $b \leq y \leq x$, we have that $d_{\triangle_\sigma}(x;P) \leq d_{\triangle_\sigma}(y;P)$.
\end{proposition}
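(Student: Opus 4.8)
The plan is to reduce the statement to the geometry of an interval and then exhibit a pointwise (almost sure) inclusion of events.

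First I would write $\triangle_\sigma$ explicitly for $p=1$. With $p+1 = 2$ vertices, Definition \ref{sigma:simplicial:depth:dependent:vertices} gives $y_1 = \frac{\sigma+1}{2}x_1 + \frac{1-\sigma}{2}x_2$ and $y_2 = \frac{1-\sigma}{2}x_1 + \frac{\sigma+1}{2}x_2$; hence $y_1 + y_2 = x_1 + x_2$ and $y_1 - y_2 = \sigma(x_1 - x_2)$, so $\triangle_\sigma[x_1,x_2]$ is the closed interval with midpoint $m := (x_1+x_2)/2$ and half-length $r := \sigma\abs{x_1 - x_2}/2$, i.e. $\triangle_\sigma[x_1,x_2] = [m - r, m + r]$. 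Writing $M := (X_1+X_2)/2$ and $R := \sigma\abs{X_1 - X_2}/2$ for $X_1, X_2$ independent with distribution $P$, this yields
\[
d_\Delta(x; P) = \mathbb{P}\bigl(\abs{x - M} \leq R\bigr), \qquad x \in \mathbb{R}.
\]
Since the support of $P$ is contained in $S \subset [a,b]$, we have $X_1, X_2 \in [a,b]$ almost surely, and therefore $M \in [a,b]$ almost surely.

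Next I would compare the distances $\abs{x - M}$ and $\abs{y - M}$ on the almost sure event $\{M \in [a,b]\}$. If $b \leq y \leq x$, then $M \leq b \leq y \leq x$, so $\abs{x - M} = x - M \geq y - M = \abs{y - M}$; if $x \leq y \leq a$, then $x \leq y \leq a \leq M$, so $\abs{x - M} = M - x \geq M - y = \abs{y - M}$. In either case $\abs{x - M} \geq \abs{y - M}$ almost surely, whence
\[
\{\abs{x - M} \leq R\} \subseteq \{\abs{y - M} \leq R\} \quad \text{up to a null set}.
\]
Taking probabilities, $d_\Delta(x; P) = \mathbb{P}(\abs{x-M} \leq R) \leq \mathbb{P}(\abs{y-M} \leq R) = d_\Delta(y; P)$.

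I do not expect a genuine obstacle here; the only points needing a little care are (i) that the midpoint $M$ lies in $[a,b]$ almost surely --- this is precisely where the hypothesis $S \subset [a,b]$ together with $p=1$ is used, and is the reason the statement is restricted to the one-dimensional case --- and (ii) the degenerate realizations with $X_1 = X_2$, where $R = 0$ and $\triangle_\sigma[X_1,X_2]$ is a single point, which are covered by the same inequalities.
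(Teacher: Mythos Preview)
Your proof is correct and is cleaner than the paper's. The paper decomposes $d_\Delta(x;P)$ into integrals over the two ``wedge'' regions $S^+(x)$ and $S^-(x)$ (according to which of $x_1,x_2$ is larger), then writes $d_\Delta(y;P)-d_\Delta(x;P)$ as a signed sum of integrals over set differences like $S^+(y)\setminus S_2^+(x)$, $S^+(x)\setminus S_1^+(y)$, etc., and checks that the negative pieces fail to intersect $S\times S$ while the diagonal term $\{(y,y)\}$ is absorbed by the positive pieces. You instead parametrize $\triangle_\sigma[X_1,X_2]$ by its midpoint $M$ and half-length $R$, obtaining $d_\Delta(x;P)=\mathbb{P}(|x-M|\le R)$; the monotonicity then follows from a single pointwise inclusion $\{|x-M|\le R\}\subseteq\{|y-M|\le R\}$, which is immediate once $M\in[a,b]$ almost surely. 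Both arguments hinge on exactly the same fact---that the centroid of the sample pair lies in $[a,b]$---but your reparametrization makes the inclusion of events transparent and spares you the region-by-region bookkeeping (and the separate handling of the diagonal $\{(x,x)\}$) that the paper carries out.
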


\begin{theorem}\label{PropertiesS2}
The distribution enlarged $\sigma$-simplicial depth is (i) affine invariant, (ii) vanishes at infinity and is (iii) upper semicontinuous as a function of $x.$
Additionally, if computed with respect to a smooth distribution, it is also (iv) continuous as a function of $x.$ Furthermore, if computed with respect to a smooth distribution that is centrally symmetric, then the $\sigma$-simplicial depth satisfies the (v) maximality at the center property and is (vi) monotone nonincreasing along rays through the center.
\end{theorem}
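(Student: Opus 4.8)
The plan is to build everything on the fact that $d_\sigma(x;P)$ is the simplicial depth of $x$ with respect to $P_\sigma$: parts (i)--(iv) then follow by transferring known properties of the classical simplicial depth from $P$ to $P_\sigma$, and parts (v)--(vi) reduce to a statement about the simplicial depth of a smooth, centrally symmetric distribution, with Theorems \ref{PSigmaCont} and \ref{closure:F:sigma:under:symmetry} supplying the transfer of regularity and of symmetry.

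For (i) I would first check that the $\sigma$-transformation commutes with affine maps: if $T(y)=Ay+b$ with $A$ nonsingular and $Y=\tfrac{1-\sigma}{p+1}\sum_{j=1}^{p+1}X_j+\sigma X_1\sim P_\sigma$, then $\tfrac{1-\sigma}{p+1}\sum_{j=1}^{p+1}T(X_j)+\sigma\,T(X_1)=T(Y)$, so the $\sigma$-transformation of the law of $T(X)$ is the law of $T(Y)$; affine invariance of $d_\sigma$ is then inherited from that of the simplicial depth \citep{Liu-1990}. For (ii), if $W_1,\dots,W_{p+1}$ are i.i.d.\ with law $P_\sigma$, then $x\in\triangle[W_1,\dots,W_{p+1}]$ forces $\norm{x}\le\max_i\norm{W_i}$, hence $d(x;P_\sigma)\le(p+1)\,\mathbb{P}(\norm{W_1}\ge\norm{x})\to 0$ as $\norm{x}\to\infty$. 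For (iii), the set $\{(x,x_1,\dots,x_{p+1}):x\in\triangle[x_1,\dots,x_{p+1}]\}$ is closed, so the integrand in Definition \ref{sigma:simplicial:depth:independent:vertices} is upper semicontinuous in $x$ for fixed vertices, and the reverse Fatou lemma transfers this to $d_\sigma(\cdot;P)$, just as in the proof of Theorem \ref{PropertiesS1}(iii). For (iv), Theorem \ref{PSigmaCont} makes $P_\sigma$ smooth whenever $P$ is, and smoothness gives $\mathbb{P}(x\in\partial\triangle[W_1,\dots,W_{p+1}])=0$ for fixed $x$ (conditioning on $p-1$ of the vertices forces the last onto a fixed hyperplane); Fatou applied to $\liminf_{x'\to x}\mathbf{I}(x'\in\triangle[\,\cdot\,])\ge\mathbf{I}(x\in\operatorname{int}\triangle[\,\cdot\,])$ then yields lower semicontinuity, so with (iii) the depth is continuous --- again as in the proof of Theorem \ref{PropertiesS1}(iv).

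For (v)--(vi), Theorems \ref{PSigmaCont} and \ref{closure:F:sigma:under:symmetry} reduce matters to showing that the simplicial depth of an arbitrary smooth distribution $Q$ on $\mathbb{R}^p$ that is centrally symmetric about $\mu$ is maximal at $\mu$ and nonincreasing along rays from $\mu$; after a translation, take $\mu=0$. Let $Y_1,\dots,Y_{p+1}$ be i.i.d.\ with law $Q$ and set $K:=\operatorname{conv}(\pm Y_1,\dots,\pm Y_{p+1})$, a random convex body symmetric about $0$. Central symmetry and independence give that $(\epsilon_1Y_1,\dots,\epsilon_{p+1}Y_{p+1})$ has the same distribution as $(Y_1,\dots,Y_{p+1})$ for every $\epsilon\in\{-1,1\}^{p+1}$; averaging $d(x;Q)=\mathbb{P}(x\in\triangle[\epsilon_1Y_1,\dots,\epsilon_{p+1}Y_{p+1}])$ over $\epsilon$ gives $d(x;Q)=2^{-(p+1)}\,\mathbb{E}[N(x)]$, where $N(x):=\#\{\epsilon:x\in\triangle[\epsilon_1Y_1,\dots,\epsilon_{p+1}Y_{p+1}]\}$. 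The elementary identity $\bigcup_\epsilon\triangle[\epsilon_1Y_1,\dots,\epsilon_{p+1}Y_{p+1}]=\{\sum_{i=1}^{p+1}a_iY_i:\sum_{i=1}^{p+1}\abs{a_i}=1\}$ exhibits this union as the image of the boundary of the $\ell_1$-ball under the linear map $a\mapsto\sum_ia_iY_i$; since that map is onto $\mathbb{R}^p$ for $Q^{p+1}$-almost every realization (the $Y_i$ span by smoothness) and each of its fibres is an affine line, which meets the ball and hence its boundary, the union equals $K$. The same geometry shows that, for a fixed $x$, $Q^{p+1}$-a.s.\ the fibre over $x$ meets the boundary of the cross-polytope in either $0$ or $2$ points, each interior to a distinct facet, so $N(x)\in\{0,2\}$; and since smoothness makes $\mathbb{P}(x\in\partial K)=0$, one gets $N(x)=2\,\mathbf{I}(x\in K)$ a.s., hence $d(x;Q)=2^{-p}\,\mathbb{P}(x\in K)$. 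Part (v) is then immediate, since $0\in K$ always, so $d(0;Q)=2^{-p}\ge d(x;Q)$ for all $x$; and for (vi), convexity of $K$ together with $0\in K$ makes $\{t\ge 0:tv\in K\}$ an interval for each direction $v$ and each realization, so $t\mapsto\mathbf{I}(tv\in K)$ is nonincreasing on $[0,\infty)$ and hence so is $t\mapsto d(tv;Q)=2^{-p}\,\mathbb{P}(tv\in K)$.

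Parts (i)--(iv) I expect to be routine, essentially inherited from $P_\sigma$ and mirroring the proofs already given for the simplex-enlarged depth. The real work should be in (v)--(vi): the classical maximality and ray-monotonicity of the simplicial depth are stated under absolute continuity and angular symmetry \citep{Liu-1990}, while here only smoothness and central symmetry of $P_\sigma$ are at hand, so the main obstacle is the careful bookkeeping of the $Q^{p+1}$-null exceptional events behind the identity $d(\cdot;Q)=2^{-p}\,\mathbb{P}(\cdot\in K)$ --- realizations with affinely dependent $Y_i$, and values of $x$ on a facet hyperplane or on $\partial K$ --- which is precisely where the smoothness of $P_\sigma$ furnished by Theorem \ref{PSigmaCont} is used. (Alternatively one could invoke Liu's ray-monotonicity for angularly symmetric distributions, since central symmetry implies angular symmetry, after checking that its proof needs only smoothness.)
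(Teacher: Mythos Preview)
Your treatment of (i)--(iv) matches the paper's: both reduce to the simplicial depth of $P_\sigma$, invoke the corresponding parts of Theorem~\ref{PropertiesS1} (or Liu's original results), and use Theorem~\ref{PSigmaCont} to transfer smoothness. For (v)--(vi), however, the paper takes the short route you mention only parenthetically: it observes that central symmetry of $P$ gives central (hence angular) symmetry of $P_\sigma$ via Theorem~\ref{closure:F:sigma:under:symmetry}, and then simply cites \citet[Theorem~3]{Liu-1990}. Your main argument instead re-derives the key identity $d(x;Q)=2^{-p}\,\mathbb{P}\bigl(x\in\operatorname{conv}(\pm Y_1,\dots,\pm Y_{p+1})\bigr)$ directly from central symmetry and smoothness. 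This is a genuinely different, more self-contained route; it buys you independence from Liu's theorem and, as you note, sidesteps the question of whether Liu's proof really needs absolute continuity or only smoothness --- a point the paper glosses over. The cost is the extra geometric bookkeeping you flag (affine independence of the $Y_i$, the fibre line meeting only open facets of the cross-polytope, $x\notin\partial K$), each of which does reduce to a conditioning argument against smoothness of $Q=P_\sigma$, though the facet-interior claim deserves a line or two more care than you give it. Either approach is fine; the paper's is shorter, yours is more honest about the hypotheses.
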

Given a statistical data depth, $d,$ with respect to a distribution $P$ on  $\mathbb{R}^p$ and $\alpha > 0,$ the associated depth trimmed region is
$
 R_\alpha := \{ x \in \mathbb{R}^p : d(x; P) \ge \alpha \}.
$
Upper semicontinuity of the $\sigma$-simplicial depths implies that the corresponding trimmed regions are closed and there exists a maximizer. Further properties of the trimmed regions for the $\sigma$-simplicial depths follow from Theorem \ref{PropertiesS1}, Theorem \ref{PropertiesS2} and \citet[Theorem 3.1]{Zuo-2000-c}.

\begin{proposition}
\label{proposition:depthtrimmedregionsigmadepth}
The depth trimmed regions based on the simplex and distribution enlarged $\sigma$-simplicial depth are (i) affine equivariant, (ii) nested and (iii) compact. 
Moreover, for the distribution enlarged $\sigma$-simplicial depth, they are (iv) connected if $P$ is smooth and centrally symmetric.
\end{proposition}

\subsection{Definition and properties of the sample $\sigma$-simplicial depths}

We provide, in the following definitions, sample versions for the simplex enlarged (Definition \ref{sample:sigma:simplicial:depth:version:0}) and the distribution enlarged (Definition \ref{sample:sigma:simplicial:depth:version:1}) $\sigma$-simplicial depth functions.

\begin{definition}\label{sample:sigma:simplicial:depth:version:0}
Given $\sigma> 0,$ $p\geq 1$ and $n \geq p+1,$ the sample simplex enlarged \emph{$\sigma$-simplicial depth} of a point $x \in \mathbb{R}^p$ with respect to a distribution $P$ on $\mathbb{R}^p$ is 
\begin{equation*}
	d_{\triangle_{\sigma},n}(x; P) := \binom{n}{p+1}^{-1} \sum_{1 \leq i_1 < \dots < i_{p+1} \leq n} \mathbf{I} ( x \in \triangle_{\sigma} [ X_{i_1}, \dots, X_{i_{p+1}} ] ), 
\end{equation*}
where $X_1, \dots, X_{n}$ is a sample of random draws taken from $P.$
\end{definition}

\begin{definition}\label{sample:sigma:simplicial:depth:version:1}
Given $\sigma> 0,$ $p\geq 1,$ $n \geq (p+1)^2$ and $k:=k(n,p)$  the greatest integer less than or equal to $n/(p+1),$ the sample distribution enlarged \emph{$\sigma$-simplicial depth} of a point $x \in \mathbb{R}^p$ with respect to a distribution $P$ on $\mathbb{R}^p$ is 
$$ d_{P_{\sigma}, k}(x; P):= \binom{k}{p+1}^{-1} \sum_{1 \leq i_1 < \dots < i_{p+1} \leq k} \mathbf{I} ( x \in \triangle [ Y_{i_1}, \dots, Y_{i_{p+1}} ] ),
$$
where $Y_i = \sigma\cdot(X_{1+(i-1)(p+1)} - \bar{X}_i) + \bar{X}_i,$  $\bar{X}_i = \sum_{j=1}^{p+1} X_{j+(i-1)(p+1)}/(p+1)$ for $i=1, \dots, k$ and  $X_1, \dots, X_{n}$ is a sample of random draws taken from $P.$ 
\end{definition}
Note that $Y_1, \dots, Y_{k}$ in Definition \ref{sample:sigma:simplicial:depth:version:1} are random draws from $P_{\sigma},$ each obtained as a linear combinations of $p+1$ random draws from $P$. Thus, $d_{P_{\sigma}, k}(\cdot; P)=d_{S,k}(\cdot,P_\sigma),$ with $d_{S,k}(\cdot,P_\sigma)$ denoting the sample simplicial depth, as in \eqref{ss}, based on $k$ random draws from the distribution $P_{\sigma}$. Taking $\sigma=1,$ we obtain $d_{P_{\sigma}, k}(\cdot; P)=d_{S,k}(\cdot,P),$  retrieving the sample simplicial depth, but based only on $k$ of the $n$ random draws $X_1, \dots, X_{n}$. 

As for the theoretical $\sigma$-simplicial depths, their sample versions also take value zero for elements of the space  outside a region that depends on the selected $\sigma.$ This does not pose any problem in applications since it is always possible to choose $\sigma$ such that the sample $\sigma$-simplicial depths assign positive value to the region of interest.

Next results study the properties of the estimators of the $\sigma$-simplicial depths.

\begin{proposition} \label{proposition:sample:sigma:depth:pathwise:nondecreasing}
For any distribution $P$ on $\mathbb{R}^p$ and any $x \in \mathbb{R}^p,$ the function 
\begin{equation*}
\begin{split}
	[0, \infty) & \rightarrow [0,1] \\
	\sigma & \mapsto d_{\triangle_{\sigma}, n}(x ; P)
\end{split}
\end{equation*}
is monotonically nondecreasing for each realization of the random variables $X_1, \dots, X_{n},$ drawn with distribution $P.$
\end{proposition}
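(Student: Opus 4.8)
The plan is to reduce the claim to a purely geometric fact about the enlarged simplex and then average. Fix a realization of $X_1,\dots,X_n$. By Definition \ref{sample:sigma:simplicial:depth:version:0}, $d_{\triangle,n}(x;P)$ is the arithmetic mean, with the nonnegative weights $\binom{n}{p}^{-1}$ that sum to one, of the indicators $\mathbf{I}(x\in\triangle_\sigma[X_{i_1},\dots,X_{i_p}])$, so it suffices to show that each such indicator is a nondecreasing function of $\sigma$ on $[1,\infty)$; nonnegativity and the bound $d_{\triangle,n}(x;P)\le 1$ are then automatic because each summand lies in $\{0,1\}$. Equivalently, it suffices to prove that for fixed points $z_1,\dots,z_m$ the enlarged simplices are nested increasingly in $\sigma$, i.e.\ $\triangle_{\sigma_1}[z_1,\dots,z_m]\subseteq\triangle_{\sigma_2}[z_1,\dots,z_m]$ whenever $1\le\sigma_1\le\sigma_2$.

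First I would identify the enlargement as a homothety centred at the centroid of the vertices. Put $\bar z:=\frac1m\sum_{j=1}^m z_j$. From $y_i=\sigma z_i+\frac{1-\sigma}{m}\sum_j z_j=\bar z+\sigma(z_i-\bar z)$ and averaging over $i$, the centroid of $y_1,\dots,y_m$ is again $\bar z$. Hence a point $z$ belongs to $\triangle_\sigma[z_1,\dots,z_m]=\triangle[y_1,\dots,y_m]$ precisely when $z=\bar z+\sigma\sum_i\lambda_i(z_i-\bar z)$ for some $\lambda_i\ge 0$ with $\sum_i\lambda_i=1$, that is, when $\bar z+\sigma^{-1}(z-\bar z)\in\triangle[z_1,\dots,z_m]$ (note that $\triangle_1[z_1,\dots,z_m]=\triangle[z_1,\dots,z_m]$).

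With this characterisation the nestedness is one line of convexity. Let $1\le\sigma_1\le\sigma_2$ and $z\in\triangle_{\sigma_1}[z_1,\dots,z_m]$, so $w:=\bar z+\sigma_1^{-1}(z-\bar z)\in\triangle[z_1,\dots,z_m]$. Setting $t:=\sigma_1/\sigma_2\in(0,1]$ one checks that $\bar z+\sigma_2^{-1}(z-\bar z)=(1-t)\bar z+tw$. The centroid $\bar z$ lies in the convex set $\triangle[z_1,\dots,z_m]$, and so does $w$; therefore their convex combination $(1-t)\bar z+tw$ also lies there, which by the characterisation means $z\in\triangle_{\sigma_2}[z_1,\dots,z_m]$. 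This gives the inclusion, hence the monotonicity of every indicator, hence the monotonicity of $d_{\triangle,n}(x;P)$ in $\sigma$.

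I do not anticipate a genuine obstacle; the single point that must not be overlooked is that the map $z_i\mapsto y_i$ is a dilation about the \emph{centroid} of the $z_i$'s rather than about some fixed exterior point, which is exactly what forces the centroid to remain inside the dilated simplex and makes the convexity step work. No separate treatment of degenerate configurations is needed: even when $z_1,\dots,z_m$ are affinely dependent, $\triangle[z_1,\dots,z_m]$ is their convex hull, still convex and still containing its centroid, so the argument is unchanged.
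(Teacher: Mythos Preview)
Your proof is correct and follows essentially the same approach as the paper: both reduce the claim to the nestedness $\triangle_{\sigma_1}\subseteq\triangle_{\sigma_2}$ for $\sigma_1\le\sigma_2$ (the paper isolates this as a separate lemma), and both establish that inclusion by writing a point of the smaller simplex as a convex combination involving the centroid. Your phrasing via the homothety $z\mapsto\bar z+\sigma^{-1}(z-\bar z)$ is a bit more conceptual than the paper's explicit barycentric bookkeeping, but the underlying argument is the same.
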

The above result is  in general not valid for the estimator of the distribution enlarged $\sigma$-simplicial depth.

For any distribution $P$ on $\mathbb{R}^p,$ $d_{\triangle_{\sigma},n}(\cdot; P)$ is a $U$-statistic for the estimation of $d_{\triangle_\sigma}(\cdot; P)$ while $d_{P_{\sigma}, k}(\cdot; P)$ is a $U$-statistic with respect to $Y_{i}$'s for the estimation of $d_{P_{\sigma}}(\cdot; P).$ Hence, the theoretical results for the empirical $\sigma$-simplicial depths can be derived by the properties of U-processes \citep{Korolyuk-2013}. In particular, the sample depths converge almost surely to their population counterparts. Moreover, since the set of simplices in $\mathbb{R}^p$ forms a VC-class \citep{Arcones-1993}, this convergence can be made uniform over $\mathbb{R}^p$, as it is shown in  next theorem. This result is fundamental in practical applications since it ensures that the empirical $\sigma$-depths are a good approximation of their population counterparts as the sample size increases.

\begin{theorem} \label{uniform:consistency}
For any distribution $P$ on $\mathbb{R}^p,$ we have that 
\begin{itemize}
\item[]$\sup_{ x \in \mathbb{R}^p } \abs{ d_{\triangle_{\sigma},n} (x; P) - d_{\triangle_\sigma} (x; P) } \xrightarrow[ n \to \infty ]{} 0$ almost surely and
\item[]$\sup_{ x \in \mathbb{R}^p } \abs{ d_{P_{\sigma},k(n,p)} (x; P) - d_{P_{\sigma}} (x; P) } \xrightarrow[ n \to \infty ]{} 0$ almost surely.
\end{itemize}
\end{theorem}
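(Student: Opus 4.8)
The plan is to read off from Proposition~\ref{U} that each of the three estimators is a U-statistic with a uniformly bounded kernel indexed by $x \in \mathbb{R}^p$, and then to invoke the strong uniform law of large numbers for U-processes indexed by Vapnik--Chervonenkis (VC) classes; no regularity of $P$ is needed because all the kernels involved are indicator functions, hence bounded.

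First I would dispose of $d_{\sigma,k(n,p)}$ separately, since the remark following Definition~\ref{sample:sigma:simplicial:depth:version:1} identifies it with the sample simplicial depth computed from $k(n,p)$ i.i.d.\ draws $Y_1,\dots,Y_{k(n,p)}$ from $P_\sigma$ (these draws are well defined for every sample size, as $Y_i$ uses only a fixed block of the $X_j$'s). Uniform strong consistency of the sample simplicial depth holds for an arbitrary distribution \citep{Liu-1990, Arcones-Gine-1993}; applied to $P_\sigma$ it yields $\sup_x \lvert d_{k}(x;P_\sigma) - d_\sigma(x;P) \rvert \to 0$ almost surely as $k \to \infty$, and since $k(n,p) \to \infty$ as $n \to \infty$, the second display follows at once.

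For $d_{\triangle,n}$ and $d_{\sigma,n}$, Proposition~\ref{U} says that each is a U-statistic over $X_1,\dots,X_n$ estimating $d_\triangle$, respectively $d_\sigma$, with kernel (after symmetrizing the summand, if one wishes to have a symmetric kernel) of the form $h_x(x_1,\dots,x_m) = \mathbf{I}(x \in \triangle[\varphi(x_1,\dots,x_m)])$, where $\varphi$ is the fixed linear map $l$, respectively $s$, of the remark in Section~\ref{Simplicial}. Thus $h_x$ is the indicator of $\varphi^{-1}(E_x)$, with $E_x := \{(y_1,\dots,y_{p+1}) : x \in \triangle[y_1,\dots,y_{p+1}]\}$. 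Expressing membership of $x$ in a simplex through barycentric coordinates and clearing determinants (Cramer's rule), $E_x$ is a fixed finite Boolean combination of sets $\{P_j(\,\cdot\,;x) \ge 0\}$ with $P_j$ polynomials of bounded degree; positivity sets of a finite-dimensional space of functions form a VC class, and finite Boolean combinations of VC classes are VC, so $\{E_x : x \in \mathbb{R}^p\}$ is a VC class of sets. Pulling back by the fixed map $\varphi$ preserves this, because $P_j \circ \varphi$ is again a polynomial of bounded degree, and symmetrizing only averages finitely many such classes; hence $\{h_x : x \in \mathbb{R}^p\}$ is a measurably indexed VC class of kernels with envelope identically $1$. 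This is essentially the verification already carried out for the ordinary simplicial depth in \citet{Arcones-Gine-1993}, and I expect it to be the one delicate point, the only genuinely new ingredient being the observation that composition with a fixed linear (hence polynomial) map preserves the VC property.

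Finally, I would invoke the strong law of large numbers for U-processes: for a U-statistic based on i.i.d.\ observations whose kernel ranges over a uniformly bounded VC class, $\sup_x \lvert U_n(h_x) - \mathbb{E}\, h_x(X_1,\dots,X_m) \rvert \to 0$ almost surely \citep{Arcones-Gine-1993, Korolyuk-2013}. Since $\mathbb{E}\, h_x(X_1,\dots,X_m)$ equals $d_\triangle(x;P)$, respectively $d_\sigma(x;P)$, by construction of the two estimators, this gives the first and third displays and completes the proof.
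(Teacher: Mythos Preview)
Your proposal is correct and follows essentially the same route as the paper: both arguments invoke Proposition~\ref{U} to view the estimators as U-statistics, handle $d_{\sigma,k(n,p)}$ by identifying it with the ordinary sample simplicial depth with respect to $P_\sigma$, and obtain the other two cases from the strong law for U-processes of \citet{Arcones-Gine-1993} after checking that the indexing kernels come from a VC class of sets. The only difference is cosmetic---you spell out the VC verification via barycentric coordinates and polynomial positivity sets, whereas the paper simply cites Corollaries~6.7 and~3.3 of \citet{Arcones-Gine-1993} (noting that the rescaling/linear map does not disturb the VC property).
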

Using the uniform convergence of the sample simplicial depths to their population counterpart, the convergence of the maximizer can also be established. The maximizer of the distribution enlarged $\sigma$-simplicial depth corresponds, for centrally symmetric distributions, to the point of central symmetry (Theorem \ref{PropertiesS2}). The following corollary ensures in particular that the maximizer of $d_{P_\sigma,k}$ is a good approximation of this point as the sample size increases.

\begin{corollary} \label{convergence:of:the:sample:median}
The following statement is satisfied for $(d,d_n)$ equal to either $(d_{\triangle_\sigma},d_{\triangle_{\sigma},n})$ or $(d_{P_{\sigma}},d_{P_{\sigma},k(n,p)}).$ For any distribution $P$ on $\mathbb{R}^p$ such that $d(\cdot; P)$ is uniquely maximized, we have that $\mu_n \rightarrow \mu$ almost surely, where $\mu$ and $\mu_n$ are maximizers of $d(\cdot; P)$ and $d_{n}(\cdot; P),$ respectively.
\end{corollary}
In the following theorem we establish the asymptotic normality of the empirical process defined by the introduced sample $\sigma$-simplicial depths as a function of $x.$ $\ell^{\infty} (\mathbb{R}^p)$ is the space of all real bounded functions on $\mathbb{R}^p$ and the convergence in law in $\ell^{\infty} (\mathbb{R}^p)$ (denoted by $\rightsquigarrow$) is in the sense of Hoffmann-J{\o}rgensen \citep{Masse-2002}.
\begin{theorem} \label{CLT}
For any distribution $P$ on $\mathbb{R}^p,$
\begin{itemize}
\item[(i)] $\{ \sqrt{n} \, ( d_{\triangle_\sigma,n}(x; P) - d_{\triangle_\sigma} (x; P) ) : x \in \mathbb{R}^p \} \underset{n \to \infty}{\rightsquigarrow} \{ (p+1) G_{P}^{\triangle_{\sigma}}(x) : x \in \mathbb{R}^p \}$ and
\item[(ii)] $\{ \sqrt{k(n,p)} \, ( d_{P_{\sigma},k(n,p)} (x; P) - d_{P_{\sigma}} (x; P) ) : x \in \mathbb{R}^p \} \underset{n \to \infty}{\rightsquigarrow} \{ (p+1) G_{P_{\sigma}}(x) : x \in \mathbb{R}^p \}.$
\end{itemize}
$G_{P}^{\triangle_{\sigma}}$ and $G_{P_{\sigma}}$ are centered Gaussian process with covariance function
\begin{equation*}
\mathbb{E} [ G(x) G(y) ] = \int g_x (z) g_y (z) \, \d Q(z) - \int g_x (z) \, \d Q(z) \int g_y (z) \, \d Q(z)
\end{equation*} 
where in (i) $g_x (z) = \int \mathbf{I} ( x \in \triangle_{\sigma} [ x_1, \dots, x_p, z ] ) \, \d P(x_1) \dots \d P(x_p)$ and $Q=P$, and in (ii) $g_x (z) = \int \mathbf{I} ( x \in \triangle [ x_1, \dots, x_p, z ] ) \, \d P_{\sigma}(x_1) \dots \d P_{\sigma}(x_p)$ and $Q=P_{\sigma}$.
\end{theorem}
Theorem \ref{CLT} implies that, roughly speaking, the errors in the approximation in Theorem \ref{uniform:consistency} are asymptotically normal with mean $0$ and variance and covariance specified by $\mathbb{E} [ G(x) G(y) ]$, for $x,y \in \mathbb{R}^p$. In the next section we study the finite-sample performance of the empirical $\sigma$-depths by means of Monte Carlo simulations. In Section \ref{data_analysis}, we use them to perform classification on four datasets. 

\section{Simulations}\label{Simulations} 
We consider below three simulation settings to compare our estimators among them and with the sample simplicial depth, the refined halfspace depth \citep{Einmahl-2015} and the illumination depth \citep{Nagy-2021}. The comparison is made making use of supervised classification. Unless when stated otherwise, the classification is performed using the linear DD-classifier as in \citet{Li-2012}. For more general classification algorithms see \citet{Cuesta-2017, Hubert-2017}.

The refined halfspace depth is based on refining the sample halfspace depth outside and in proximity of the boundaries of the convex hull of each unidimensional projection by means of extreme value statistics. The procedure requires to select two parameters $r$ ($k$ in their notation) and $\gamma$, where $r$ is the sub-sample size of the refinement and $\gamma$ controls the speed of decrease outside the central region, estimated using the sample. Specifically, the parameter $\gamma$ is estimated according to the recommendations at pag.\ 2748 of \citet{Einmahl-2015} while $r$ is set in a discrete range of values, and the refined halfspace depth is computed using $500$ random projections following \citet{Cuesta-2008} as in the R code provided by the authors. Illumination depth coincides with halfspace depth in the halfspace depth trimmed region $R_\alpha$ of level $\alpha > 0$, while the depth of a point $x \notin R_\alpha$ is given as a fraction of $\alpha$ that depends on the ratio between the volume of $R_\alpha$ and the volume of the convex hull of $R_\alpha \cup \{x\}$. In our simulations we consider values of $\alpha$ in a grid from $0.02$ to $0.2$ and use the R functions available in the supplementary material of their paper. 
 
{\it Simulation 1.} 
We study empirically our proposal under a two-class classification scenario inspired by \citet[Section 3.2]{Einmahl-2015}. Consider the two following samples: $S_n^{(1)}=\{ X_1^{(1)}, \dots, X_{n}^{(1)} \},$ drawing $n$ independent and identically distributed random variables from a distribution $P^{(1)},$ and $S_m^{(2)}=\{ X_1^{(2)}, \dots, X_{m}^{(2)} \},$ drawing $m$ independent and identically distributed random variables from a distribution $P^{(2)}.$ 
We make use of the following settings: (i) difference in location, $X_1^{(2)} \stackrel{d}{=} X_1^{(1)} + (c, c)^\top,$  (ii) difference in scale, $X_1^{(2)} \stackrel{d}{=} 3 X_1^{(1)},$  and (iii) difference in both, scale and location, $X_1^{(2)} \stackrel{d}{=} 3 X_1^{(1)} + (c,c )^\top.$ We do each of these three settings for two types of distributions, that we refer to as normal and elliptical. For the normal type, $P^{(1)}$ is a standard bivariate normal distribution and  $c=2.$ For the elliptical type,  $P^{(1)}$ has the density function 
\begin{equation*}
	f(x,y)= \begin{cases}
		\frac{3}{4 \pi} r_0^4 \left( 1+r_0^6 \right)^{-\frac{3}{2}} \quad & \frac{x^2}{4} + y^2 < r_0^2 \\
		\frac{3 \left( \frac{x^2}{4} + y^2 \right)^2}{ 4 \pi \left(1 + \left( \frac{x^2}{4} + y^2 \right)^3 \right)^{ \frac{3}{2} } }  \quad & \frac{x^2}{4} + y^2 \geq r_0^2,
	\end{cases}
\end{equation*}
where $r_0 \approx 1.2481,$ and $c=4.$
For each of the depth functions  and  each choice of $P^{(1)}$ and $P^{(2)},$ we train the linear DD-classifier on the samples $S_n^{(1)}$ and $S_m^{(2)}$ with $n=m=500.$   We generate $5000$ new observations ($2500$ from $P^{(1)}$ and $2500$ from $P^{(2)}$). 

We compute the misclassification rates, according to the linear classifier, for the new samples outsiders of both $S_n^{(1)}$ and $S_m^{(2)}.$ That is, among the $5000$ new observations, we consider as test sample the observations outside both the convex hull of $S_n^{(1)}$ and the convex hull of $S_m^{(2)}.$ We repeat this experiment $100$ times.  
In Figure \ref{boxplots:sigma:simplicial:version:0:out} 
\begin{figure}[t]
	\includegraphics[width=\linewidth,height=.5\linewidth]{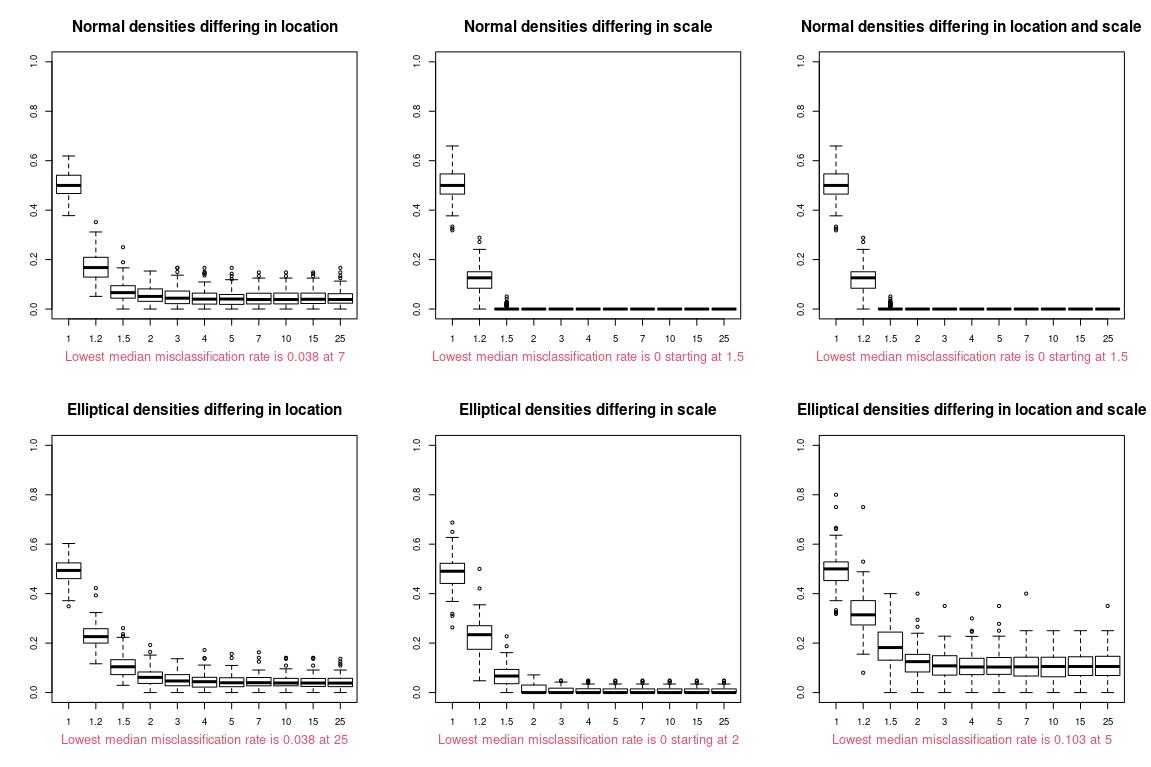}
	\caption{Boxplots of $100$ misclassification rates of the outsiders for $d_{\triangle_{\sigma},n}$ with $\sigma\in\{1.2, 1.5, 2, 3, 4, 5, 7, 10, 15, 25\}$ and the sample simplicial depth ($\sigma=1$), using the linear DD-plot classifier.}
 	\label{boxplots:sigma:simplicial:version:0:out}
\end{figure}
we illustrate the obtained misclassification rates for the sample simplex enlarged $\sigma$-simplicial depth, $d_{\triangle_{\sigma},n}$,  for different values of $\sigma,$ in the range from from 1.2 to 25, and for the sample simplicial depth ($\sigma=1$). Figure \ref{boxplots:sigma:simplicial:version:1:out} in Section \ref{Asim} of the supplementary material contains the misclassification rates for the sample distribution enlarged $\sigma$-simplicial depth, whose performance is slightly worse. 
Figures \ref{boxplots:refined:halfspace:out} and \ref{boxplots:illumination_depth_out} 
show the results for the refined halfspace depth and the illumination depth respectively. Comparing the results it is evident that the misclassification rates obtained by $d_{\triangle_{\sigma},n}$ are dramatically lower than those of the refined halfspace depth, except when there is a big difference in location, in which case the refined halfspace depth is only slightly worse. The illumination depth and $d_{\triangle_{\sigma},n}$ perform similarly except when $\alpha$ is small and the difference is in scale; in these cases the illumination depth performs badly. 
\begin{figure}[t]
	\includegraphics[width=\linewidth,height=.5\linewidth]{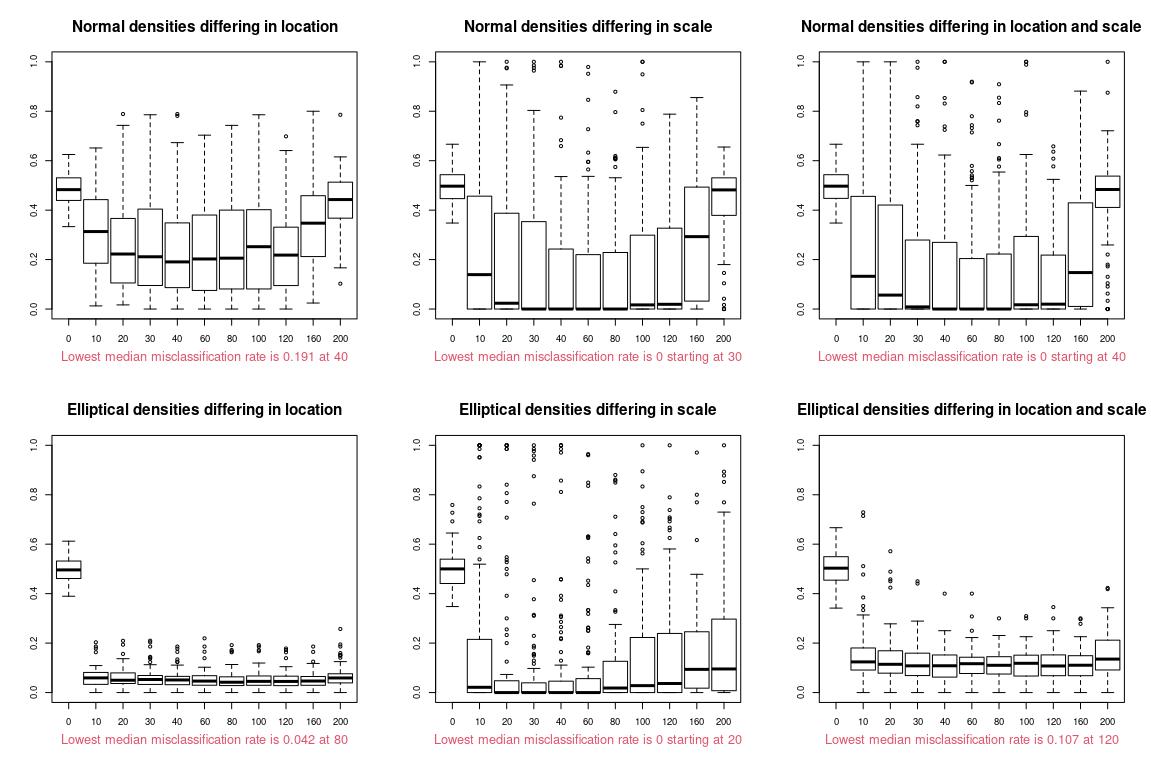}
	\caption{Boxplots of $100$ misclassification rates of the outsiders for the refined halfspace depth with $r\in\{10, 20, 30, 40, 60, 80, 100, 120, 160, 200\}$ and the sample halfspace depth ($r=0$), using the linear DD-plot classifier.}
 	\label{boxplots:refined:halfspace:out}
\end{figure}

Section \ref{Asim}, in the supplementary material, contains the misclassification rate for the whole sample, all the 5000 new observations are considered as test sample, for the different studied depth functions (see Figures \ref{TtriangleT}, \ref{figure:refined:halfspace}, \ref{figure:illumination_depth} and \ref{BskT}). This section also contains misclassification rates for other classification methods, namely, linear discriminant analysis \citep[LDA]{Fisher-1936}, quadratic discriminant analysis \citep[QDA]{Hastie-2001}, and k-nearest neighbors \citep[$k$-NN]{Cover-1967}, with $k$ varying from one to ten. See also there, Table \ref{table:misclassification_rates_whole_sample} for the whole sample and Table \ref{table:misclassification_rates_outsiders} for the outsiders. 

Since the misclassification rates are stable over a wide range of values of $\sigma$, we suggest to take the smallest $\sigma$ such that all points have positive depth with respect to at least one of the two samples. Only if there are still many ties it might be worthwhile to choose a bigger $\sigma$. See also Section \ref{data_analysis} below.
\begin{figure}[t]
	\includegraphics[width=\linewidth,height=.5\linewidth]{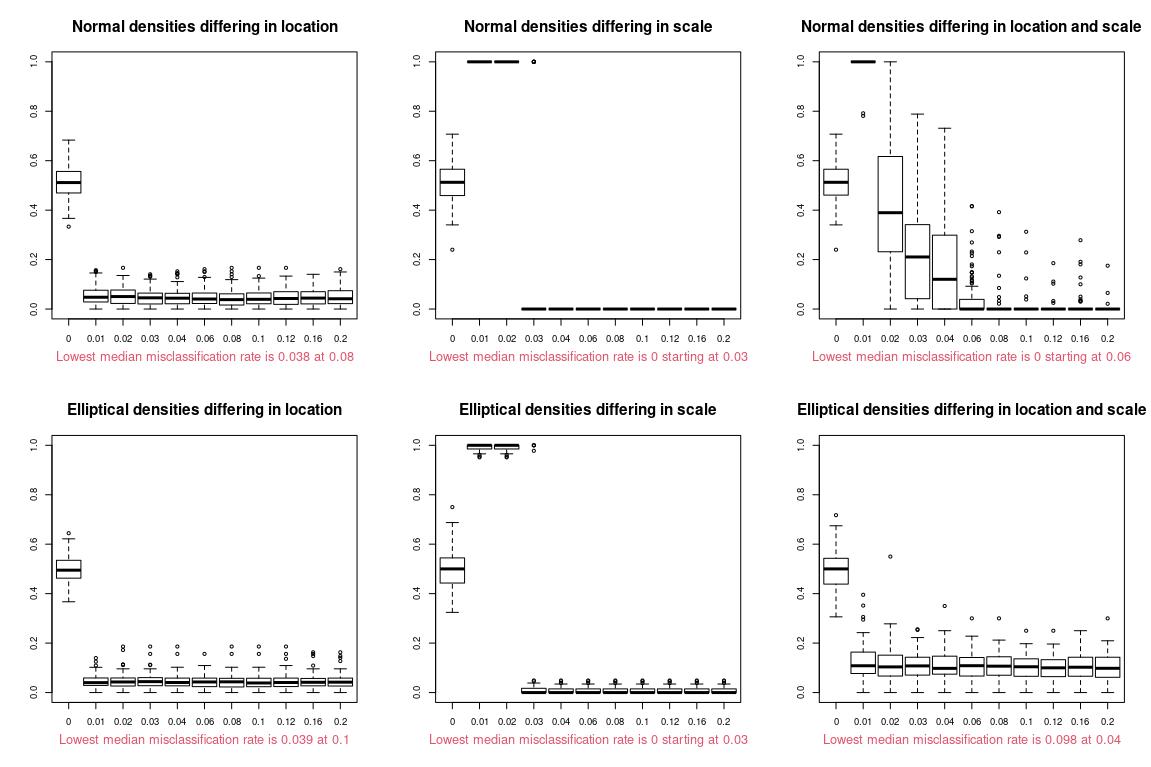}
	\caption{Boxplots of $100$ misclassification rates of the outsiders for the sample illumination depth with $\alpha \in \{0.01, 0.02, 0.03, 0.04, 0.06, 0.08, 0.10, 0.12, 0.16, 0.20 \}$ and the sample halfspace depth ($\alpha=0$). Linear DD-plot classifier.}
 	\label{boxplots:illumination_depth_out}
\end{figure}

{\it Simulation 2.} We perform a simulation under the scenario of four bivariate independent normal distributions with identity covariance matrix. We denote them by $P^{(1)}, P^{(2)}, P^{(3)}, P^{(4)}.$ We take $P^{(1)}$ with mean $(-4,0),$ $P^{(2)}$ with mean $(-\delta,0)$ where  $\delta \in (0,4)$ and  $P^{(4)}$ with mean $(4,0).$ We consider two scenarios: 
(i) symmetric distributions:   $P^{(3)}$ with mean $(\delta,0)$; and (ii) asymmetric distributions:  $P^{(3)}$ with mean $(2,0)$.
The perfect classifier would result on closeness of  $P^{(2)}$  to $P^{(1)}$ and of  $P^{(3)}$ to $P^{(4)}.$

We generate $500$ training samples from $P^{(1)}$ and another $500$ from $P^{(4)}$ and $2500$ test samples from  $P^{(2)}$ and another  $2500$ from $P^{(3)}$. The procedure is repeated $100$ times for each $\delta\in\{.1, .2, .3, \ldots, 3.9 \}$. The mean of the misclassification rates using the sample simplicial depth and the sample simplex enlarged $\sigma$-simplicial depth, $d_{\triangle_{\sigma},n},$ for different values of $\sigma$ are computed and plotted as a function of $\delta$ in Figure \ref{figure:data:normals}. 
\begin{figure}[!htb]
\begin{center}
\includegraphics[width=.49\linewidth]{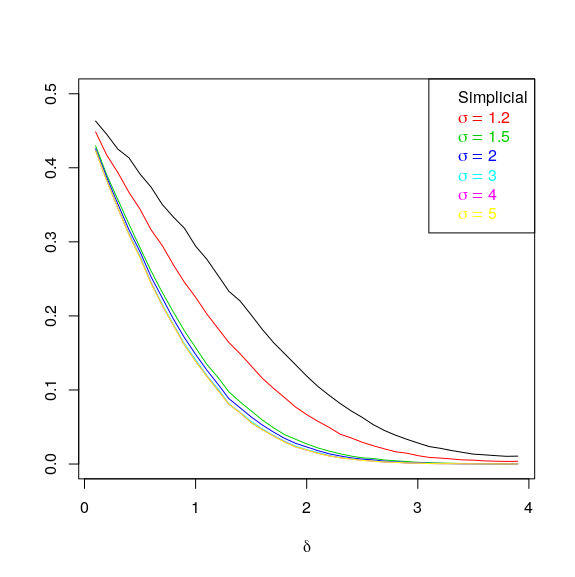}
\includegraphics[width=.49\linewidth]{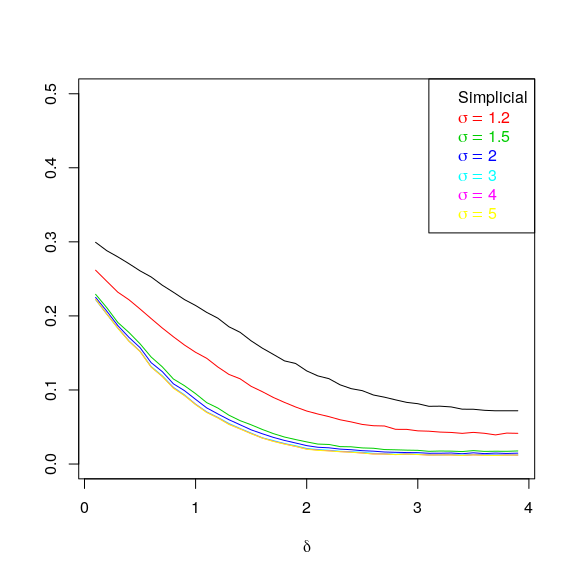}
\end{center}
\caption{Mean, as a function of $\delta$, of misclassification rates over $100$ times  using the sample simplicial depth and $d_{\triangle_{\sigma},n}$ for $\sigma\in \{1.2, 1.5, 2, 3, 4, 5\}$. On the left symmetric distributions are given, on the right asymmetric ones.}
\label{figure:data:normals}
\end{figure}
Clearly, $d_{\triangle_{\sigma},n}$ outperforms the sample simplicial depth for all values of $\delta$. Furthermore, the results are similar for any $\sigma\geq 1.5,$ which shows that this procedure is stable with respect to the choice of $\sigma$.

{\it Simulation 3.} Another drawback of sample simplicial depth is that it is not able to differentiate among points on the boundary of the convex hull of the sample. The same problematic occurs for sample halfspace depth, which assigns depth value of $1/n$ to all these points. It is worth mentioning that for sample halfspace depth several ties may also occur for observations in the inner part of the convex hull whereas for simplicial depth ties primarily occur only for observations on the boundary of the convex hull. We propose to break ties using $\sigma$-simplicial depth with $\sigma>1$.

 To test our methodology, we draw $n \in \{50, 200 \}$ observations from the standard bivariate normal distribution and compare the correct ranking of the observations on the boundary of the convex hull with the ranking given by the sample simplex enlarged $\sigma$-simplicial depth, $d_{\triangle_{\sigma},n}$. We repeat the experiment $100$ times. Figure \ref{figure:correlation} contains boxplots of the (Spearman) correlation between the correct ranking and the ranking given by $d_{\triangle_{\sigma},n}$ for $\sigma\in \{5, 10, 15, 20, 25\}$. We compare the performance of $d_{\triangle_{\sigma},n}$ with refined halfspace depth for $r/n \in \{ 0.05, 0.1, 0.15, 0.2, 0.25 \}$ and illumination depth, where we use a fraction $c \in \{0.3, 0.4, 0.5, 0.6, 0.7\}$ of the sample points to build the halfspace central region (see Section 5.1 of \citet{Nagy-2021}). From Figure \ref{figure:correlation} we observe that $d_{\triangle_{\sigma},n}$ outperforms refined halfspace depth and it is slightly preferable to illumination depth. Finally, we notice that the results are stable with respect to the choice of $\sigma$.
\begin{figure}[!htb]
\begin{center}
\includegraphics[width=\linewidth]{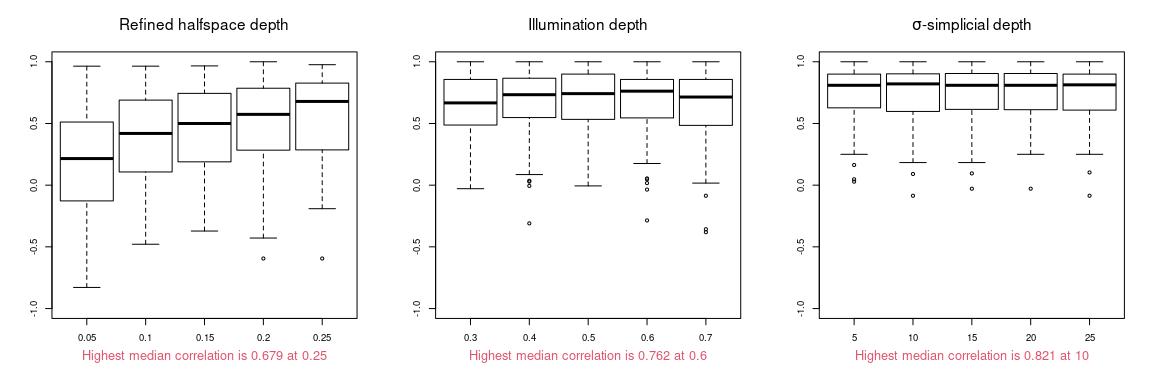}
\includegraphics[width=\linewidth]{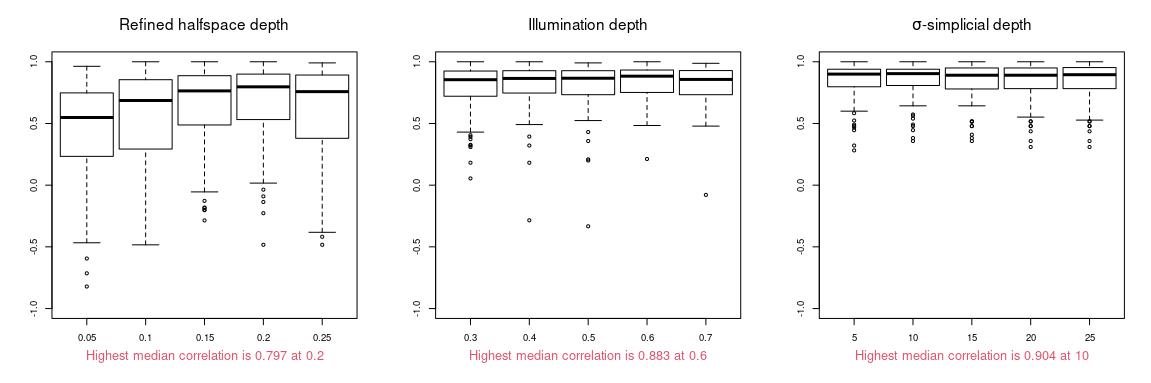}
\end{center}
\caption{Boxplots of $100$ (Spearman) correlation coefficients between the correct ranking of sample points on the boundary of the convex hull and the rankings based on the refined halfspace depth with $r/n \in \{ 0.05, 0.1, 0.15, 0.2, 0.25 \}$ (left), illumination depth using a fraction $c \in \{0.3, 0.4, 0.5, 0.6, 0.7\}$ of sample points to build the halfspace central region (center), and $d_{\triangle_{\sigma},n}$ with $\sigma \in \{ 5, 10, 15, 20, 25 \}$ (right). The sample size is $n=50$ in the first row and $n=200$ in the second row.}
\label{figure:correlation}
\end{figure}

\section{Data analysis}
\label{data_analysis}

We consider a supervised classification framework with two or more classes. We examine four datasets.

{\it 1.} AIS dataset \citep{Cook-2009} contains physical and blood measurements from high performance athletes (100 female and 102 male) at the Australian Institute of Sports (AIS).  We perform principal component analysis \citep[PCA]{Hotelling-1936} and use the first two principal components as measurements. We consider the athletes' sex as the class.

{\it 2.} Chaetocnema dataset \citep{Lubischew-1962} includes anatomical measures of 74 males of flea beetles, of three different genera: Ch.\ Concinna (21 cases), Ch.\ Heikertingeri (31 cases) and Ch.\ Heptapotamica (22 cases).  We consider three measurements for each class: (i) the front angle of the aedeagus, (ii) the maximal width of the head between the external edges of the eyes, and (iii) the aedeagus width from the side.

{\it 3.} Haltica dataset  \citep{Lubischew-1962} includes anatomical measures of 39 Haltica flea beetles specimens which belong to two different species: Haltica oleracea and Haltica carduorum. We consider two measurements: (i) the distance of the transverse groove from the posterior border of the prothorax and (ii) the length of the elytram.

{\it 4.} Iris dataset \citep{Fisher-1936} consists of $150$ observations equally divided among three classes (Iris Setosa, Iris Versicolour, and Iris Virginica) with four measurements each (sepal length, sepal width, petal length, and petal width).

We perform supervised classification on them making use of six different methodologies. Three are based on statistical data depth and make use of the DD-classifier. The others are the classical  LDA, QDA and $k$-NN methodologies. For the depth based procedures, we make use of the proposed sample simplex enlarged $\sigma$-simplicial depth, $d_{\triangle_{\sigma},n},$ and of the refined halfspace and  illumination depths. For the parameters, we take $\sigma$ between 1 and 1000 for $d_{\triangle_{\sigma},n}$, $r \in \{1,2, \dots, 10\}$ for  refined halfspace depth, $\alpha \in \{ 0.01,0.02,\dots, 0.2\}$ for illumination depth and $k \in \{ 1, \dots, 10 \}$ for $k$-NN.

To compare the performance of the different methods, we divide each of the above datasets into training and test sets. We do this by taking the first part of each class as training and the rest as test. When the size of the class is odd, one more datum is used for training. We report in Table \ref{table:correct_classification_using_datasets} the number of test data correctly classified.
\begin{table}[!htbp]
  \caption{\label{table:correct_classification_using_datasets} The first three columns correspond to the name of the employed datasets and the sizes of the used training and test data. The rest of the columns correspond to the number of test data points correctly classified using $d_{\triangle_{\sigma},n}$, refined halfspace depth, illumination depth, LDA, QDA, and $k$-NN. Column $d_{\triangle_{\sigma},n}^{\tiny\mbox{CV}}$ contains results using the value of $\sigma$ obtained via cross-validation. The other columns contain the best results across the parameters.}
\vspace{0.5cm}  
 \centering
\small\addtolength{\tabcolsep}{-3pt}
\begin{tabular}{p{2.4cm}p{1.3cm}p{1.3cm} p{1.3cm}  p{1.3cm} p{1.3cm}  p{1.3cm}  p{1.3cm}  p{1.3cm}  p{1.2cm}}
Dataset& Train s. & Test s. & $d_{\triangle_{\sigma},n}^{\tiny\mbox{CV}}$ & $d_{\triangle_{\sigma},n}$ & R.\ half.\ & Illumin.\ & LDA & QDA & $k$-NN \\
\hline
AIS & 101 & 101 & 84 & {\bf{89}} & 84 & {\bf{89}} & 87 & 86 & 82 \\
Chaetocnema & 38 & 36 & {\bf{36}} & {\bf{36}} & 24 & {\bf{36}} & {\bf{36}} & {\bf{36}} & {\bf{36}} \\
Haltica & 20 & 19 & 17 & {\bf{18}} & 15 & 17 & 17 & 15 & 16 \\
Iris & 75 & 75 & {\bf{74}} & {\bf{74}} & 54 & 73 & 72 & 73 & 71 \\
\end{tabular}
\end{table}
Columns five to seven and ten contain the best results across the parameters. They correspond to  $d_{\triangle_{\sigma},n}$, refined halfspace depth, illumination depth and $k$-NN.  LDA and QDA do not depend on parameters and the corresponding results are reported in columns eight and nine.

\begin{figure}[!htbp]
\begin{center}
  \includegraphics[width=.44\linewidth]{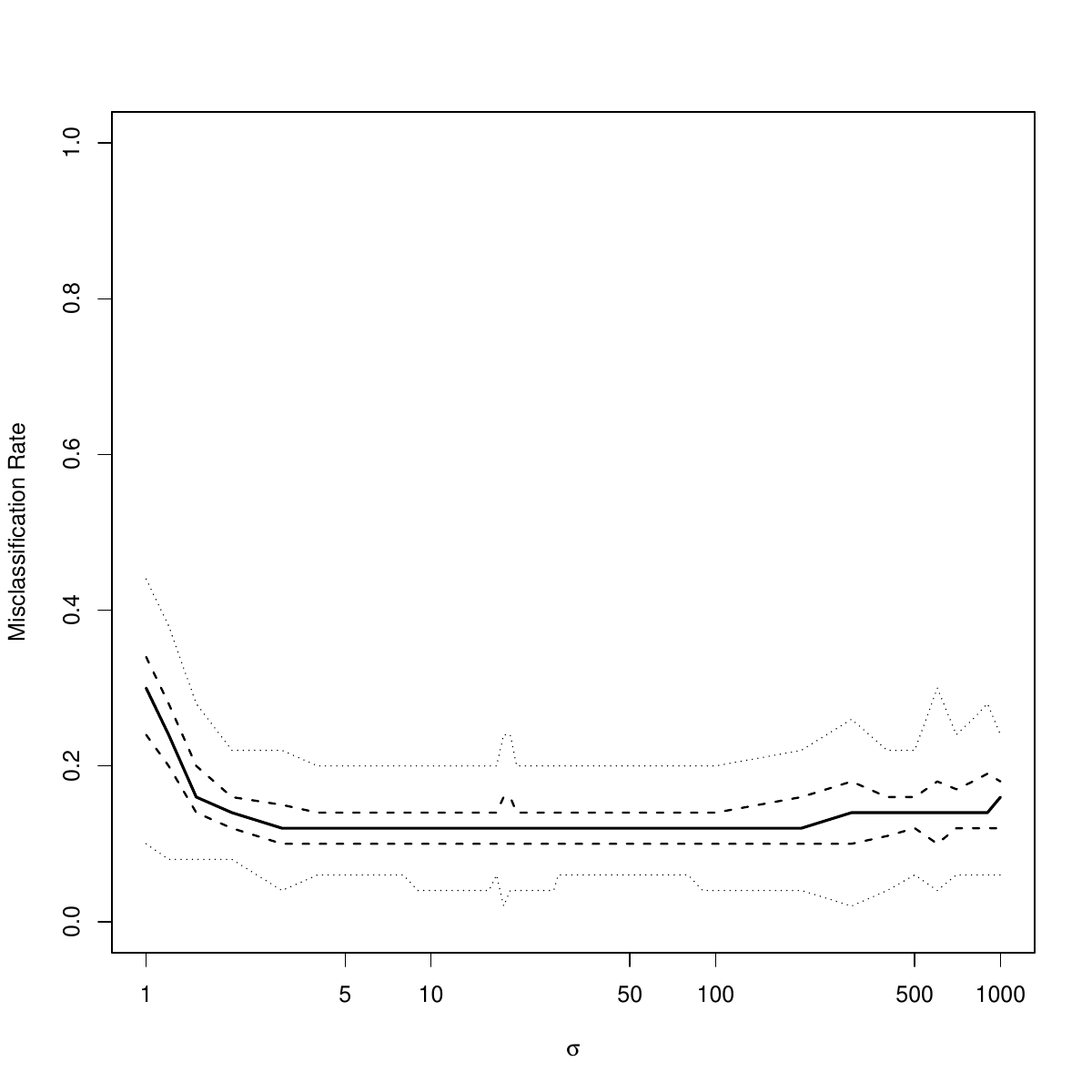}
  \includegraphics[width=.44\linewidth]{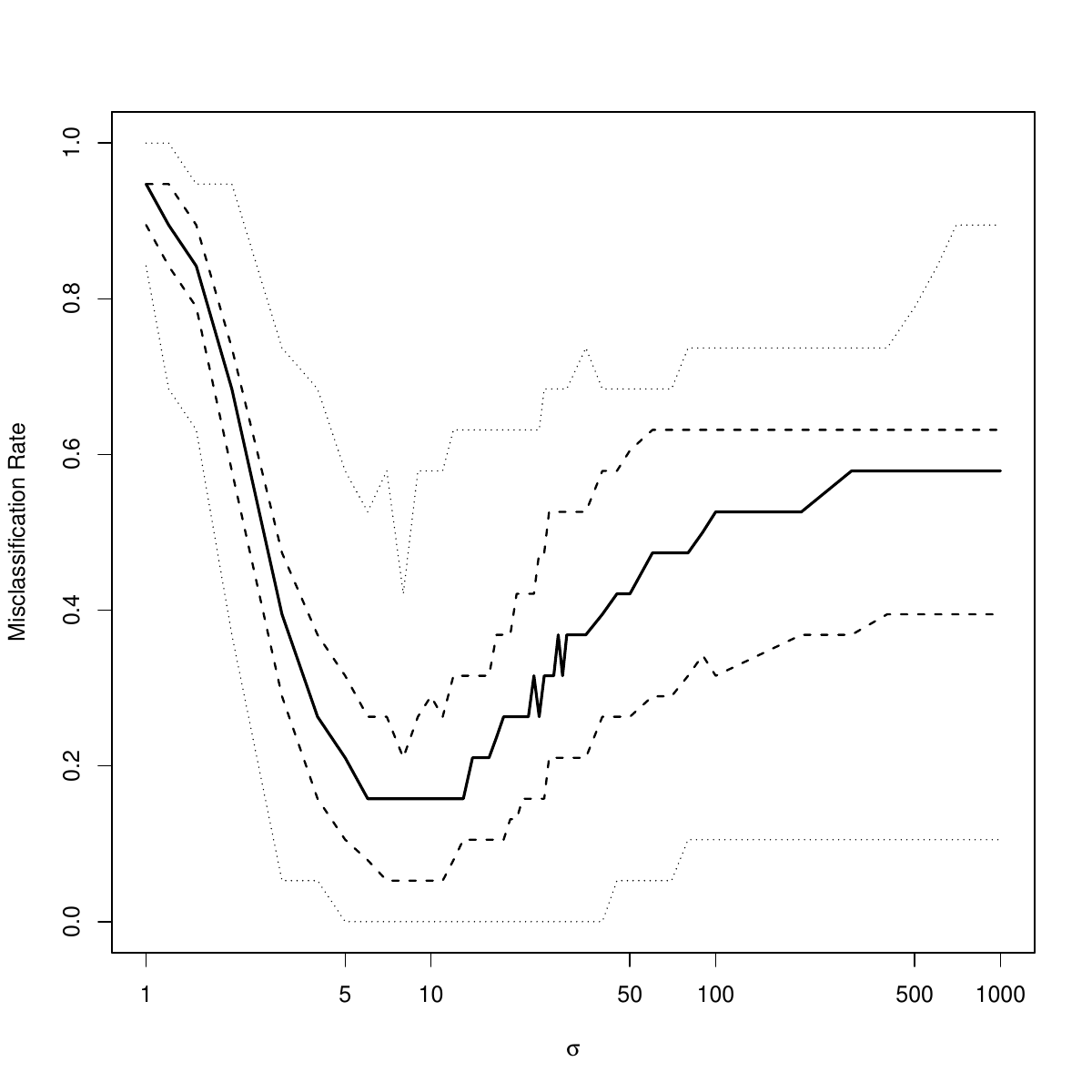}
  \includegraphics[width=.44\linewidth]{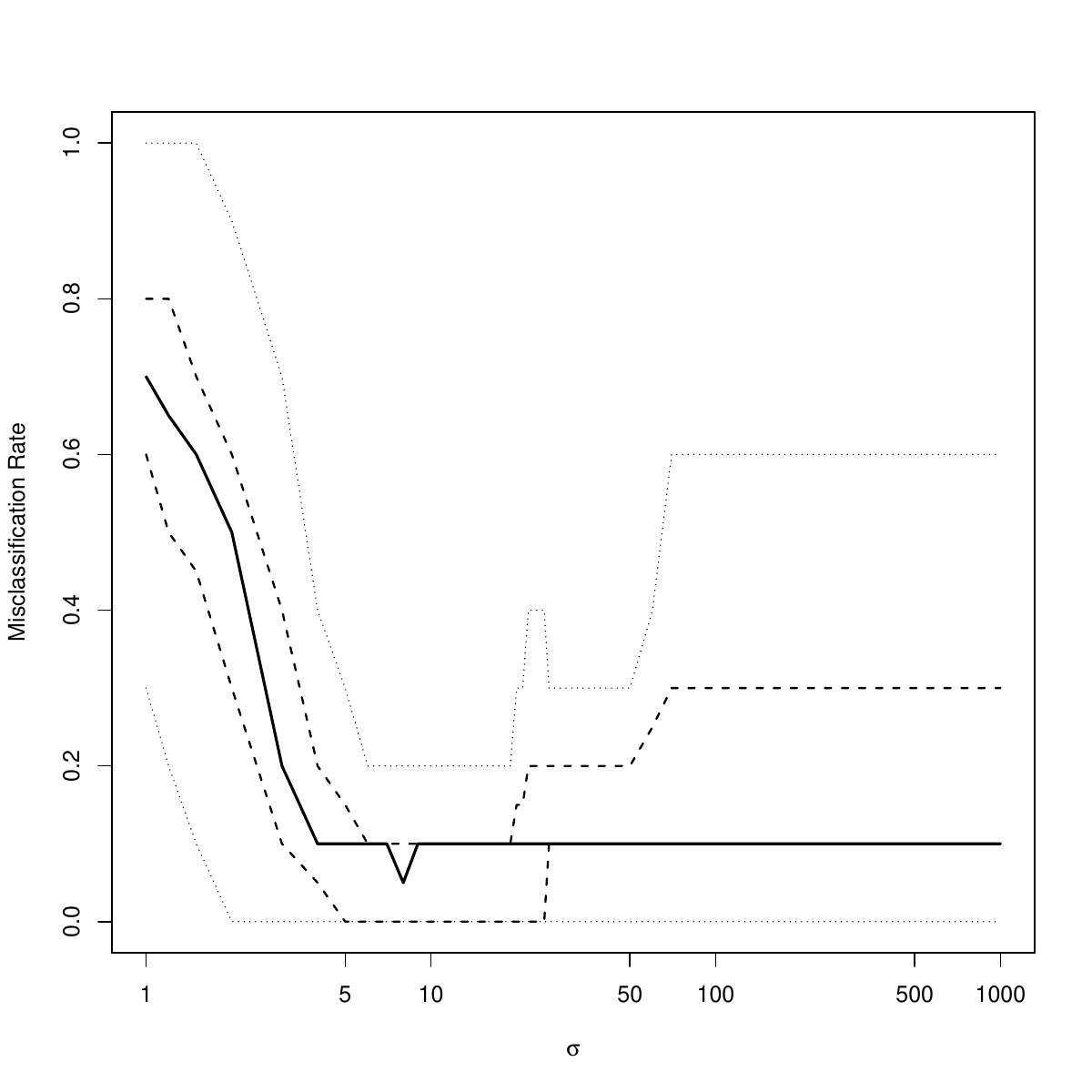}
  \includegraphics[width=.44\linewidth]{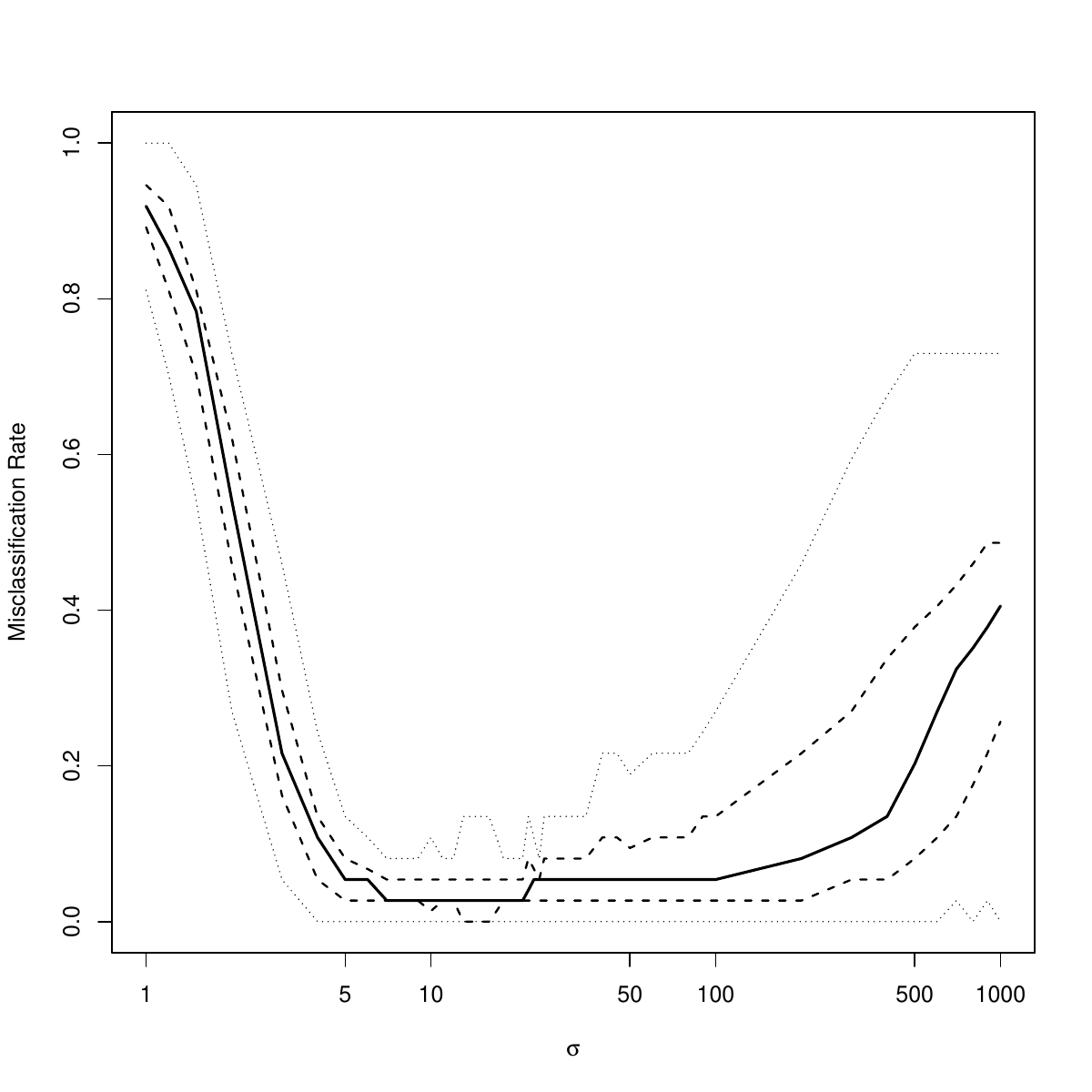}
\end{center}
\caption{Median (solid line), $25\%$ and $75\%$ quantiles (dashed lines) and  whiskers of the boxplot (dotted lines) of the misclassification rates using $d_{\triangle_{\sigma},n}$ as a function of $\sigma$, over $100$ cross-validation resamplings, for AIS dataset (top left), Chaetocnema dataset (top right), Haltica dataset (bottom left), and Iris dataset (bottom right).}
\label{figure:cv_iris_and_chaetocnema}
\end{figure}

For $d_{\triangle_{\sigma},n}$, we also report in the fourth column the results obtained by choosing $\sigma$ via cross-validation on the training dataset. This is done by randomly splitting the training set into two parts: one is a secondary training set and the other is a secondary test set. This procedure is repeated $100$ times and the boxplots of the misclassification rates are reported as a function of $\sigma$ in Figure \ref{figure:cv_iris_and_chaetocnema}. The resulting median curve is plotted in a solid line, the first and third quartile in dashed lines and the boxplot's whiskers in dotted lines. We observe a similar pattern among the plots of the four datasets, as $\sigma$ increases, there is a quick decline of the misclassification rate followed by a constant period. Then, the misclassification rate  increases again as a function of $\sigma$ for some of the cases,  but at a lower rate. When multiple values of $\sigma$ yield the same median misclassification rate, we choose the smallest $\sigma$ among them. For the refined halfspace and illumination depths, we do not run cross-validation, as these methods require a larger sample size to have a good estimate of the depth trimmed regions. In the table, we highlight in bold the best results for each dataset. Clearly, the winner is $d_{\triangle_{\sigma},n}.$ It is worth observing that, the results obtained when selecting $\sigma$ via cross-validation on the training set, are only slightly worse than what it is reported in the best case scenario.

\section{Concluding remarks} \label{section:concluding_remarks}

We have introduced the first two generalizations of simplicial depth that do not necessarily give value zero to the elements of the space that are outside the convex hull of the distribution support. Although the literature contains instances of depth that are always positive, this is not the case for the simplicial depth that, with Tukey depth, is one of the most well-known depth functions. Tukey depth also suffers from the same problematic.
The two proposed generalizations of simplicial depth are based on the idea of enlargement. As the simplicial depth of a point is the probability of the point being in a random simplex, we can either enlarge the simplex or the probability distribution. These enlargements will depend on a constant, $\sigma,$ that we select so that the probability that the elements of interest belong to the simplex is positive, and consequently their depth values. Note that such constant always exists. Furthermore, we prove that under mild regularity conditions the behavior of the proposed generalizations varies in a continuous manner as a function of $\sigma.$

There is a complication associated with enlarging the simplex, which is the result of obtaining  a simplex with dependent vertices. This is not the case in the original simplicial depth nor when the distribution is enlarged.
By enlarging the distribution, we compute the simplicial depth with respect to the distribution associate to a linear combination of independent random variables. This leads to an interesting side problem. In relation to it, we study whether regularity conditions of the distribution are inherited by the enlarged distribution. We obtain, for instance, that continuity and spherical and central symmetry are inherited under affine combinations of independent random variables  but other symmetry notions such as halfspace and angular symmetry are not.
Among other far reaching implications, the inheritance of regularity conditions by the distribution is important to establish the properties constituting the notion of statistical data depth. From a theoretical point of view we obtain that the proposals have a good behavior.
However, there is a clear theoretical advantage of the distribution enlarged simplicial depth, as it satisfies more properties, which include maximality at the center, monotonicity and connected depth trimmed regions, under certain mild conditions.

The existing literature contains Tukey depth estimators that are not necessarily zero outside the convex hull of the sample. For the simplicial depth, we propose here the first two estimators. They are U-statistics for the  estimation of  the given generalizations of simplicial depth. We obtain uniform consistency of these estimators, consistency of their maximizers and asymptotic normality of the associated empirical processes.
These theoretical results are crucial in practical applications as they ensure that the estimators provide a good approximation of the proposed generalizations of simplicial depth. We see this in the performed real data analyses and Monte Carlo simulations.
For the simulations, we perform supervised classification on different scenarios (with the convex hull of the training and test distributions overlapping and not overlapping) showing how to select the parameter $\sigma$ appropriately. Additionally, we compare our estimators with those of the Tukey depth that are not necessarily zero outside the convex hull of the sample obtaining that our estimators have  similar or better performance.  For the real data analysis we make use of four datasets where we do a thorough analysis on the selection of $\sigma$ and compare the sample depths  with other classical methodologies for classification.
We observe that in practice, better results are obtained when making use of the simplex enlarged $\sigma$-simplicial depth estimator.

\vspace{0.5cm}
\addresses

\clearpage
\bigskip
\begin{center}
{\large\bf SUPPLEMENTARY MATERIAL}
\end{center}

\setcounter{section}{0}
\renewcommand{\thesection}{\Alph{section}}

\section{Proofs}
Unless specified otherwise, $P$ is a probability measure over the Borel sets of $\mathbb{R}^p.$

\begin{proof}[Proposition \ref{proposition:enlargement:translation}] The case of $X$ being  symmetric about $\mu$ with respect to either spherical, elliptical or central symmetry, is addressed below in the proofs of Proposition \ref{proposition:elliptical:symmetry} and \ref{proposition:spherical:central:symmetry} ($n=1$).

Let $X$ be angularly symmetric about $\mu$. According to \citet[Theorem 2.2]{Zuo-2000-b}, angular symmetry is equivalent to $\mathbb{P}(u^{\top}(X-\mu) \geq 0) = \mathbb{P}(u^{\top}(\mu-X) \geq 0)$ for all $u \in S^{p-1}$. Denote $E_u^{1} := [ u^{\top} (X-\mu) \geq 0 ]$ and $E_u^{2} := [ u^{\top} (\mu-X) \geq 0 ]$ for any $u \in S^{p-1}.$ Let $\lambda>0$. Multiplying by $\lambda$ on both sides of the inequality in $E_u^{1}$ and $E_u^{2}$ and adding and subtracting $b$ to $X,$ we have that $E_u^{1} = [ u^{\top} ( (\lambda X+b)- (\lambda \mu+b)) \geq 0 ]$ and
		$E_u^{2} = [ u^{\top} ( (\lambda \mu+b)- (\lambda X+b)) \geq 0 ].$ Due to	$\mathbb{P}(E_u^{1}) = \mathbb{P}(E_u^{2})$ and the arbitrary of $u,$  we get that $\lambda X+b$ is angularly symmetric about $\lambda \mu+b$.
The proof follows analogously for $\lambda<0,$ as we obtain 
	$E_u^{1} = [ u^{\top} ( (\lambda \mu+b)- (\lambda X+b)) \geq 0 ]$ and
	$E_u^{2} = [ u^{\top} ( (\lambda X+b)- (\lambda \mu+b)) \geq 0 ].$
 If $\lambda=0,$ $\lambda X+b$ is the degenerate random variable $b$, which, clearly, is angularly symmetric about $\lambda \mu+b = b$.

Let $X$ be halfspace symmetric about $\mu$. According to \citet[Theorem 2.4]{Zuo-2000-b},  $X$ is halfspace symmetric about $\mu$ if and only if  $\med(u^{\top}X)=u^{\top} \mu$ for all $u \in S^{p-1},$ with $\med(\cdot)$ denoting the median function. For any $\lambda \in \mathbb{R}$ and $b \in \mathbb{R}^p,$ we have that 
$
	\med(u^{\top} (\lambda X+b) ) = \lambda \med(u^{\top} X) + u^{\top} b = \lambda u^{\top} \mu + u^{\top} b = u^{\top} (\lambda \mu+b),
$ where the first equality is thanks to the median is a homogeneous function. Thus, $\lambda X+b$ is halfspace symmetric about $\lambda \mu+b$.
\end{proof}
\vspace{.5cm}

\begin{proof}[Proposition \ref{proposition:elliptical:symmetry}]
Let $X_i$ be elliptically symmetric about $\mu$   for each $i=1, \dots, n.$ As $X_1, \ldots, X_n$ are identically distributed, there exists a nonsingular matrix $V$ such that $V X_i$ is spherically symmetric about $V \mu$  for all $i=1, \dots, n.$ Thus,  $U (V X_i - V \mu) \eqd (V X_i - V \mu )$ for any orthonormal matrix $U$ and $i=1,\dots,n.$  Given $\lambda \in \mathbb{R}$ and $b \in \mathbb{R}^p,$ let us denote $Y:= \sum_{i=1}^{n} \lambda_i X_i+b$ and $\tilde{\mu}:=\sum_{i=1}^{n} \lambda_i\mu+b.$ Thus, 
\begin{equation*}
\begin{split}
	U(VY - V \tilde{\mu} ) &\eqd U \biggl( \sum_{i=1}^{n} \lambda_i (V X_i - V \mu) \biggr) \eqd \sum_{i=1}^{n} \lambda_i U (V X_i - V \mu) \\
	&\eqd \sum_{i=1}^{n} \lambda_i ( V X_i - V \mu) \eqd VY - V \tilde{\mu},
\end{split}
\end{equation*}
for any orthonormal matrix $U.$ Then, $Y$ is elliptically symmetric about $\tilde{\mu}$. 

The cases of spherical and  central symmetry are addressed below in the proof of Proposition \ref{proposition:spherical:central:symmetry}.
\end{proof}
\vspace{.5cm}

\begin{proof}[Proposition \ref{proposition:spherical:central:symmetry}]
 Given $\lambda \in \mathbb{R}$ and $b \in \mathbb{R}^p,$ let us denote  $Y:= \sum_{i=1}^{n} \lambda_i X_i+b$ and $\tilde{\mu}:=\sum_{i=1}^{n} \lambda_i\mu_i+b.$ 
Let $X_i$ be spherically symmetric about $\mu_i$ for $i=1, \dots, n.$ Then, for any orthonormal matrix $U$
	\begin{equation*}
		U(Y-\tilde{\mu}) \eqd U \biggl( \sum_{i=1}^{n} \lambda_i (X_i - \mu_i) \biggr) \eqd \sum_{i=1}^{n} \lambda_i U (X_i - \mu_i) \eqd \sum_{i=1}^{n} \lambda_i (X_i - \mu_i) \eqd Y - \tilde{\mu},
	\end{equation*}
which implies that $Y$ is spherically symmetric about  $\tilde{\mu}.$
	
According to \citet[Lemma 2.1]{Zuo-2000-b},  a random variable $X$ is centrally symmetric about $\mu \in \mathbb{R}^p$ if and only if  $u^{\top}(X-\mu) \eqd u^{\top}(\mu-X)$ for all $u \in S^{p-1}$.
Let $X_i$ be centrally symmetric about $\mu_i$ for $i=1, \dots, n.$ Then, $Y$ is centrally symmetric about  $\tilde{\mu}$ as 
	$u^{\top} ( Y - \tilde{\mu} ) \eqd \sum_{i=1}^{n} \lambda_i u^{\top} (X_i - \mu_i) \eqd \sum_{i=1}^{n} \lambda_i u^{\top} ( \mu_i - X_i) \eqd u^{\top} ( \tilde{\mu} - Y ).$
\end{proof}
\vspace{.5cm}

\begin{proof}[Corollary \ref{C4}]
The proof follows directly from Proposition \ref{proposition:spherical:central:symmetry} as elliptical symmetry implies central symmetry.
\end{proof}
\vspace{.5cm}

\begin{proof}[Proposition \ref{PSigmaCont}]
Recall that a distribution $P$ on $\mathbb{R}^p$ (or a random variable $X \sim P$) is  continuous if $P(\{ x \})=0$ for all $x \in \mathbb{R}^p$ and absolutely continuous if  $P(A)=0$ for any Lebesgue measure zero and measurable set $A \subset \mathbb{R}^p.$ Since $P_{\sigma}$ is the distribution of $\sigma X_1 + \frac{1-\sigma}{p+1} \sum_{j=1}^{p+1} X_j$ where $X_1, \dots, X_{p+1}$ are independent and identically distributed random variables with distribution $P,$ it is enough to show that a linear combination of continuous (or respectively absolutely continuous or smooth) random variables is continuous (or respectively absolutely continuous or smooth). For this, it suffices to prove that: (i)   if $X \sim P$ is continuous (or respectively absolutely continuous or smooth), then  $\lambda X$ is continuous (or respectively absolutely continuous or smooth) for any $\lambda \in \mathbb{R} \setminus \{ 0 \};$ and (ii) if $X \sim P$ and $Y \sim Q$ are continuous (or respectively absolutely continuous or smooth), then  the sum $X+Y$ is continuous (or respectively absolutely continuous or smooth). 

For (i), observe that $\lambda X \sim P^{\lambda}$ where  $P^{\lambda}(B):=P(\frac{1}{\lambda} B)$  for all measurable subsets $B \subset \mathbb{R}^p,$ while for (ii), notice that $X+Y$ has distribution $P*Q$ such that 
$	(P * Q) (B) = \int P(B + \{ y \} ) \d Q(y)$
for $B \subset \mathbb{R}^p$ measurable.
In both cases, the result follows by considering sets $B$ of a specific form:  if $P$ is continuous take $B=\{ x \}$ for $x \in \mathbb{R}^p$, if $P$ is absolutely continuous consider Lebesgue measure $0$ sets and  if $P$ is smooth consider hyperplanes in $\mathbb{R}^p$.
\end{proof}
\vspace{.5cm}

\begin{proof}[Proposition \ref{closure:F:sigma:under:symmetry}]
	It is a direct consequence of Proposition \ref{proposition:elliptical:symmetry}.
\end{proof}
\vspace{.5cm}

The next lemma shows that simplices are nested, which is required for the proof of Propositions \ref{continuity:of:sigma:depth:dependent:vertices:wrt:sigma} and \ref{proposition:sample:sigma:depth:pathwise:nondecreasing}.

\begin{lemma} \label{lemma:simpleces:are:increasing:with:sigma}
For all $x_1, \dots, x_{p+1} \in \mathbb{R}^p$ and scalars $\sigma^* \geq \sigma > 0,$ we have that $\triangle_{\sigma^{*}}[x_{1},\ldots,x_{p+1}]) \supset \triangle_\sigma[x_{1},\ldots,x_{p+1}])$.
\end{lemma}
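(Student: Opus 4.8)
The claim is that enlarging the scaling factor from $\sigma$ to $\sigma_* \ge \sigma \ge 1$ enlarges the simplex $\triangle_\sigma[x_1,\dots,x_{p+1}]$. The natural approach is to work with barycentric coordinates relative to the centroid $c := \frac{1}{p+1}\sum_{j=1}^{p+1} x_j$. Recall that $\triangle_\sigma[x_1,\dots,x_{p+1}] = \triangle[y_1,\dots,y_{p+1}]$ with $y_i = \sigma x_i + (1-\sigma) c = c + \sigma(x_i - c)$. So the enlarged vertices are obtained by dilating the original vertices about the common centroid $c$ by the factor $\sigma$. Since $c$ is fixed (it does not depend on $\sigma$), the two enlarged simplices $\triangle_{\sigma_*}$ and $\triangle_\sigma$ are homothetic images of one another, both centered at $c$: indeed $y_i^{(\sigma_*)} - c = \frac{\sigma_*}{\sigma}(y_i^{(\sigma)} - c)$, with ratio $\sigma_*/\sigma \ge 1$.

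The key step is then the elementary geometric fact that a convex set $K$ containing a point $c$ satisfies $c + t(K - c) \supseteq K$ for every $t \ge 1$; equivalently, $K \subseteq c + t(K-c)$. First I would reduce to this: write $K = \triangle[y_1,\dots,y_{p+1}]$ (the $\sigma$-enlarged simplex), note $c \in K$ because $c = \frac{1}{p+1}\sum_i y_i$ is a convex combination of the $y_i$ (one checks $\frac{1}{p+1}\sum_i y_i = c + \sigma \cdot \frac{1}{p+1}\sum_i (x_i - c) = c$ since $\sum_i (x_i-c)=0$), and then apply the fact with $t = \sigma_*/\sigma$ to conclude $\triangle_{\sigma_*}[x_1,\dots,x_{p+1}] = c + t(K-c) \supseteq K = \triangle_\sigma[x_1,\dots,x_{p+1}]$.

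To prove the containment $K \subseteq c + t(K-c)$ for $t \ge 1$: take any $z \in K$ and write $z = c + (z-c)$. Since $c \in K$ and $z \in K$ and $K$ is convex, the point $c + \frac{1}{t}(z - c) = \frac{1}{t} z + (1 - \frac{1}{t}) c$ lies in $K$ (it is a convex combination of $z$ and $c$ because $\frac{1}{t} \in [0,1]$). Hence $z - c = t\big( (c + \frac1t(z-c)) - c\big) \in t(K-c)$, i.e. $z \in c + t(K-c)$. This gives exactly the nesting.

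I do not expect a genuine obstacle here; the statement is geometrically transparent once one observes that both enlargements share the centroid $c$. The only point requiring a little care is verifying that the centroid of the enlarged vertices $\{y_i\}$ equals $c$ (so that $c$ lies inside $\triangle_\sigma$ and serves as the common homothety center), and being careful that the homothety ratio $\sigma_*/\sigma$ is indeed $\ge 1$ under the hypothesis $\sigma_* \ge \sigma \ge 1$ (in particular $\sigma > 0$, so the ratio is well-defined). No appeal to earlier results is needed beyond the definition of $\triangle_\sigma$ in Definition \ref{sigma:simplicial:depth:dependent:vertices}.
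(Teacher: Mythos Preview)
Your proof is correct and rests on the same core idea as the paper's: both exploit that the centroid $c=\frac{1}{p+1}\sum_j x_j$ is fixed by the enlargement and that any point of $\triangle_\sigma$ can be pulled toward $c$ by the factor $\sigma/\sigma_*$ to land in $\triangle_\sigma$, hence lies in the $\sigma_*$-enlargement. The paper carries this out by explicit barycentric coordinates: it writes $y=\sum_i\alpha_i y_i$, substitutes the definition of $y_i$, and produces new coefficients $\alpha_i^*$ (a convex combination of $\alpha_i$ and the uniform weight $1/(p+1)$) that represent $y$ in terms of the $\sigma_*$-vertices. You instead recognize at once that $\triangle_{\sigma_*}=c+\tfrac{\sigma_*}{\sigma}(\triangle_\sigma-c)$ is a homothety with ratio $\ge 1$ about an interior point and invoke the general convexity fact $K\subseteq c+t(K-c)$ for $t\ge 1$. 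Your route is slightly more conceptual and avoids the coordinate bookkeeping; the paper's is more hands-on but yields explicitly the barycentric coordinates of $y$ in the larger simplex. Substantively they are the same argument.
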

\begin{proof}[Lemma \ref{lemma:simpleces:are:increasing:with:sigma} ]
Let $y \in \triangle_\sigma[x_{1},\ldots,x_{p+1}]$ and notice that $\triangle_\sigma[x_{1},\ldots,x_{p+1}]= \triangle[y_{1},\ldots,y_{p+1}]$ for
\begin{equation} \label{y}
y_i = \sigma (x_i - \bar{x}) + \bar{x},  \mbox{ with } i=1, \ldots, p+1,
\end{equation}
where $\bar{x} = \sum_{j=1}^{p+1} x_j/(p+1)$. Due to \citet[Equation (1.8)]{Liu-1990}, there exist  $\alpha_1, \dots, \alpha_{p+1} \geq 0$ with $\alpha_1+\dots+\alpha_{p+1}=1$ such that $y = \alpha_1 y_1 + \dots + \alpha_{p+1} y_{p+1};$ which by (\ref{y}) results in
$$y = \sigma \biggl( \sum_{i=1}^{p+1} \alpha_i x_i - \bar{x} \biggr) + \bar{x}.$$ Multiplying and diving by $\sigma^{*}$ the first term of the sum,
\begin{equation} \label{y:sigma:*}
\begin{split}
y &= \sigma^{*} \biggl( \sum_{i=1}^{p+1} \biggl( \frac{\sigma}{\sigma^{*}} \alpha_i \biggr) x_i - \biggl( \frac{\sigma}{\sigma^{*}} \biggr) \bar{x} \biggr) + \bar{x} \\
&= \sigma^{*} \biggl( \sum_{i=1}^{p+1} \alpha^{*}_i x_i - \bar{x} \biggr) + \bar{x}
\end{split}
\end{equation}
with $\alpha^{*}_i := \frac{\sigma}{\sigma^{*}} \alpha_i + \frac{\sigma^{*}-\sigma}{\sigma^*}$ for $i=1, \dots, p+1.$  Observe that $\alpha^{*}_i \geq 0$ and $\sum_{i=1}^{p+1} \alpha^{*}_i =1$. We rewrite equation \eqref{y:sigma:*} as $y = \alpha^{*}_1 y^{*}_1 + \dots + \alpha^{*}_{p+1} y^{*}_{p+1}$ where $y^{*}_i = \sigma^{*} (x_i - \bar{x}) + \bar{x}$ for $ i=1,2,\ldots,p+1.$ Thus, $y \in\triangle[y^{*}_{1},\ldots,y^{*}_{p+1}].$ The proof finishes because $\triangle_{\sigma^{*}}[x_{1},\ldots,x_{p+1}]=\triangle[y^{*}_{1},\ldots,y^{*}_{p+1}]$ by the definition of enlarged simplex.
\end{proof}
\vspace{.5cm}

\begin{proof}[Proposition \ref{continuity:of:sigma:depth:dependent:vertices:wrt:sigma}]
The simplex enlarged simplicial depth is monotonically non-decreasing as a function of $\sigma$ as a direct consequence of above Lemma \ref{lemma:simpleces:are:increasing:with:sigma}. We prove next the continuity on the right and the continuity for smooth $P$ of the $\sigma$-simplicial depth. Let $\sigma> 0$ and  $\{ \sigma_n \}_{n=1}^{\infty}$  be a sequence of real numbers that converges to $\sigma.$ For any $n \in \mathbb{N},$  $x\in\mathbb{R}$ and $P$ on $\mathbb{R}^p,$ we have
$$\abs{ d_{\triangle_{\sigma_n}}(x; P) - d_{\triangle_{\sigma}}(x; P) } \leq \int \abs{ g_n(x_1,\dots,x_{p+1}) - g(x_1,\dots,x_{p+1})} \d P(x_1) \dots \d P(x_{p+1}), 
$$ 
where $g(x_1,\dots,x_{p+1}):= \mathbf{I} ( x \in \triangle_{\sigma}[x_1,\dots,x_{p+1}])$ and analogously for $g_n,$ $n\geq1.$ 
Clearly $\{ g_n \}_{n=1}^{\infty}$ is measurable and bounded by $1$. 
Moreover, for $(x_1,\dots,x_{p+1})$ fixed we have two possibilities: (a) if $x$ is not on the boundary of $\triangle_{\sigma}[x_1, \dots, x_{p+1}],$ there exists an $\epsilon>0$ and $N \in \mathbb{N}$ such that $\abs{\sigma-\sigma_n} < \epsilon$ and $g_n(x_1,\dots,x_{p+1})=g(x_1,\dots,x_{p+1})$ for all $n \geq N;$ (b) if $x$ is on the boundary of $\triangle_{\sigma}[x_1, \dots, x_{p+1}]$, 
$
	\lim_{\sigma_n \uparrow \sigma} \mathbf{I} ( x \in \triangle_{\sigma_n}[x_1, \dots, x_{p+1}]) = 0
$
and $
	\lim_{\sigma_n \downarrow \sigma} \mathbf{I} ( x \in \triangle_{\sigma_n}[x_1, \dots, x_{p+1}]) = 1.
$ 
Therefore, if $\{ \sigma_n \}_{n=1}^{\infty}$  converges from above to $\sigma,$ the corresponding sequence of functions $\{ g_n \}_{n=1}^{\infty}$ convergence pointwise to $g$ and because of dominated convergence theorem, the distribution enlarged $\sigma$-simplicial depth is right continuous.
If $P$ is smooth the set $\{ (x_1,\dots,x_{p+1}) \in \mathbb{R}^{p} \times \dots \times \mathbb{R}^{p} \, : \,  x \in \partial \triangle_{\sigma}[x_1, \dots, x_{p+1}] \}$ has measure $0$. Hence, $h_n$ converges pointwise to $h$ almost everywhere. The result follows again from dominated convergence theorem.

The proof of the continuity of the distribution enlarged $\sigma$-simplicial depth for smooth $P$ is similar. Each $P_\sigma,$ and $P_{\sigma_n},$ is replaced  by $p+1$ $P$'s, so that the dependence on $\sigma$ is not in the probability distribution but only in the integrand (see Definition \ref{Psigma}).
For any $n \in \mathbb{N},$  $x\in\mathbb{R}$ and $P$ on $\mathbb{R}^p,$
\begin{equation*}
  \abs{ d_{P_{\sigma_n}}(x; P) - d_{P_{\sigma}}(x; P) } \leq \int \abs{ h_n(x_1,\dots,x_{(p+1)^2}) - h(x_1,\dots,x_{(p+1)^2})} \d P(x_1) \dots \d P(x_{(p+1)^2})
\end{equation*}
where $h(x_1,\dots,x_{(p+1)^2}) := \mathbf{I} ( x \in \triangle[\sigma (x_1-\bar{x}_{1})+\bar{x}_{1}, \ldots, \sigma (x_{1+p(p+1)} - \bar{x}_{p+1}) + \bar{x}_{p+1} ] ),$ $\bar{x}_i := \sum_{j=1}^{p+1} x_{j+(i-1)(p+1)}/(p+1),$ and analogously for $h_n,$ $n\geq1.$ The result follows again from dominated convergence theorem and the almost sure pointwise convergence of the sequence $\{ h_n \}_{n=1}^{\infty}$ to $h$.

We prove next the monotonicity of the distribution enlarged sigma simplicial depth. If $P$ has density $f(x)=\kappa g((x-\mu)^{\top} \Sigma^{-1} (x-\mu))$ from \citet[Theorem 3.3]{Zuo-2000-c} we see that $d_{P_{\sigma}}(x; P) = h((x-\mu)^{\top} \Sigma_{\sigma}^{-1} (x-\mu)),$ where $h$ is a nonincreasing function and $\Sigma_{\sigma}=\left( \sigma^2 + \frac{1-\sigma^2}{p+1} \right) \Sigma$. Therefore, for any scalars $\sigma^{*} \geq \sigma >0,$ $d_{P_{\sigma^{*}}}(x; P) \geq d_{P_{\sigma}}(x; P)$. 
\end{proof}
\vspace{.5cm}

\begin{proof}[Theorem \ref{PropertiesS1}]
(i) The affine invariance property follows from \citet[Equation (1.8)]{Liu-1990} and the fact that, for any $p \times p$ nonsingular matrix $A$ and $p$ dimensional vector $b$, $A y + b \in \triangle[ A y_1+b,\dots,A y_{p+1}+b]$ if and only if $y \in \triangle[y_1,\dots,y_{p+1}]$. (ii) For the vanishing at infinity property, observe that
\begin{equation*}
\begin{split}
	\lim_{\norm{x} \rightarrow \infty} d_{\triangle_{\sigma}} (x; P) & \leq \limsup_{\norm{x} \rightarrow \infty} d_{\triangle_{\sigma}} (x; P) \\
	& \leq \int \limsup_{ \norm{x} \rightarrow \infty } \mathbf{I}( x\in\triangle_\sigma[x_{1},\dots,x_{p+1}]) \, \d P(x_1) \dots \d P(x_{p+1}) = 0,
\end{split}
\end{equation*}
because of dominated convergence theorem. (iii) For the upper semicontinuity property,  let $x^{*} \in \mathbb{R}^p$ and note that
\begin{equation*}
\begin{split}
\limsup_{x \rightarrow x^{*} } d_{\triangle_{\sigma}}(x; P)
& \leq \int \limsup_{x \rightarrow x^{*} } \mathbf{I}( x\in\triangle_\sigma[x_{1},\dots,x_{p+1}]) \, \d P(x_1) \dots \d P(x_{p+1}) \\
& \leq \int \mathbf{I}( x^{*} \in\triangle_\sigma[x_{1},\dots,x_{p+1}]) \, \d P(x_1) \dots \d P(x_{p+1}) \\
&= d_{\triangle_{\sigma}}(x^{*}; P).
\end{split}
\end{equation*}
(iv) Finally, let $P$ be smooth, then
\begin{equation*}
\begin{split}
\liminf_{x \rightarrow x^{*} } d_{\triangle_{\sigma}}(x; P) & \geq 
\int \liminf_{x \rightarrow x^{*} } \mathbf{I}( x\in\triangle_\sigma[x_{1},\dots,x_{p+1}]) \, \d P(x_1) \dots \d P(x_{p+1}) \\
&= d_{\triangle_{\sigma}}(x^{*}; P).
\end{split}
\end{equation*}
\end{proof}
\vspace{.5cm}

\begin{proof}[Proposition \ref{proposition:monotonicityoutsidesupport}]
For any $x\in\mathbb{R}$ and $P$ on $\mathbb{R},$ 
\begin{equation*}
\begin{split}
d_{\triangle_\sigma}(x;P)&=\\
=& \int \mathbf{I} \left( \frac{1+\sigma}{2} x_1 + \frac{1-\sigma}{2} x_2 \leq x \leq \frac{1-\sigma}{2} x_1 + \frac{1+\sigma}{2} x_2 \right) \, \d P(x_1) \d P(x_2) \\
+& \int \mathbf{I} \left( \frac{1-\sigma}{2} x_1 + \frac{1+\sigma}{2} x_2 \leq x \leq \frac{1+\sigma}{2} x_1 + \frac{1-\sigma}{2} x_2 \right) \, \d P(x_1) \d P(x_2) \\
-& \int \mathbf{I} \left( x_1=x_2=x \right) \, \d P(x_1) \d P(x_2).
\end{split}
\end{equation*}
For any $x\in\mathbb{R}$, let us denote
\begin{equation*}
\begin{split}
S_1^{+}(x) &:= \left \{ (x_1, x_2) \in \mathbb{R}^2 \, : \, \frac{1+\sigma}{2} x_1 + \frac{1-\sigma}{2} x_2 \le x \right \}, \\
S_2^{+}(x) &:= \left \{ (x_1, x_2) \in \mathbb{R}^2 \, : \, \frac{1-\sigma}{2} x_1 + \frac{1+\sigma}{2} x_2 \ge x \right \}, \\
S_1^{-}(x) &:= \left \{ (x_1, x_2) \in \mathbb{R}^2 \, : \, \frac{1+\sigma}{2} x_1 + \frac{1-\sigma}{2} x_2 \ge x \right \},\\
S_2^{-}(x) &:= \left \{ (x_1, x_2) \in \mathbb{R}^2 \, : \, \frac{1-\sigma}{2} x_1 + \frac{1+\sigma}{2} x_2 \le x \right \},\\
\end{split}
\end{equation*}
$S^+(x):=S_1^{+}(x) \cap S_2^{+}(x) \mbox{ and }
S^-(x):=S_1^{-}(x) \cap S_2^{-}(x).$
Then, 
\begin{equation*}
d_{\triangle_\sigma}(x;P)=\int_{S^{+}(x)} \, \d P(x_1) \d P(x_2) + \int_{S^{-}(x)} \, \d P(x_1) \d P(x_2) - \int _{\{(x,x)\}} \, \d P(x_1) \d P(x_2).
\end{equation*}
We consider the case $b \leq y \leq x.$ The proof for the case $x \le y \leq a$ is analogous. Thus, 
\begin{equation*}
\begin{split}
d_{\triangle_\sigma}(y;P) -& d_{\triangle_\sigma}(x;P) =\\
&= \int_{S^{+}(y)\setminus S_{2}^{+}(x)} \, \d P(x_1) \d P(x_2) + \int_{S^{-}(y)\setminus S_{1}^{-}(x)} \, \d P(x_1) \d P(x_2)\\
 &- \int _{\{(y,y)\}} \, \d P(x_1) \d P(x_2) - \int_{S^{+}(x) \setminus S_{1}^{+}(y)} \, \d P(x_1) \d P(x_2) \\
 &- \int_{S^{-}(x) \setminus S_{2}^{-}(y)} \, \d P(x_1) \d P(x_2)  +  \int _{\{(x,x)\}} \, \d P(x_1) \d P(x_2).
\end{split}
\end{equation*}
Notice that $S^{+}(x)  \setminus S_{1}^{+}(y)$ and $S^{-}(x)  \setminus S_{2}^{-}(y)$ have no intersection with $S \times S$. Hence, their corresponding probability is zero. Furthermore $\{(y,y)\}$ is a subset of both $S^{+}(y) \setminus S_{2}^{+}(x)$ and $S^{-}(y)  \setminus S_{1}^{-}(x)$. All this implies that $d_{\triangle_\sigma}(y;P) - d_{\triangle_\sigma}(x;P) \ge 0$.
\end{proof}
\vspace{.5cm}

\begin{proof}[Theorem \ref{PropertiesS2}]
As the distribution enlarged $\sigma$-simplicial depth with respect to a distribution $P$ is the simplicial depth with respect to the corresponding distribution $P_{\sigma},$  properties (i) and (ii) follow from \citet[Equations (1.8) and (1.9), Theorem 1]{Liu-1990} and the, above, proof of Theorem \ref{PropertiesS1}. 
The proofs of properties (iii) and (iv) are similar to that of Theorem \ref{PropertiesS1} (see also \citet[Theorem 2]{Liu-1990}). Observe that thanks to Theorem \ref{PSigmaCont}, if $P$ is smooth, then  $P_\sigma$ is smooth.  
Finally, if $P$ is centrally symmetric then, because of Theorem  \ref{closure:F:sigma:under:symmetry}, $P_{\sigma}$ is  centrally symmetric about the same point. Therefore, properties (v) and (vi) follow from \citet[Theorem 3]{Liu-1990}.
\end{proof}
\vspace{.5cm}

\begin{proof}[Proposition \ref{proposition:depthtrimmedregionsigmadepth}]
(i) is a consequence of the affine invariance of the $\sigma$-simplicial depths (see (i) of Theorem \ref{PropertiesS1} and Theorem \ref{PropertiesS2}, and (a) of \citet[Theorem 3.1]{Zuo-2000-c}). (ii) follows directly from (b) of \citet[Theorem 3.1]{Zuo-2000-c}. For the distribution enlarged $\sigma$-simplicial depth,  (iii) holds because point (iii) of Theorem \ref{PropertiesS2} implies that the depth trimmed regions are closed and point (ii) of Theorem \ref{PropertiesS2} implies that the depth trimmed regions are bounded. For the simplex enlarged $\sigma$-simplicial depth, the proof is the same using  Theorem \ref{PropertiesS1}.
(iv) is a consequence of point (vi) of Theorem \ref{PropertiesS2} and point (c) of \citet[Theorem 3.1]{Zuo-2000-c}.
\end{proof}
\vspace{.5cm}

\begin{proof}[Proposition
\ref{proposition:sample:sigma:depth:pathwise:nondecreasing}]
$d_{\triangle_\sigma,n}(x;P)$ is an average of indicators of the form $\mathbf{I}(x \in \triangle_{\sigma}[x_1, \dots, x_{p+1}])$ for $x_1, \dots, x_{p+1} \in \mathbb{R}^p$. Then, the result follows from Lemma \ref{lemma:simpleces:are:increasing:with:sigma}. 
\end{proof}
\vspace{.5cm}

\begin{proof}[Theorem \ref{uniform:consistency}]
For any distribution $P$ on $\mathbb{R}^{p},$ $d_{\triangle_{\sigma},n}(\cdot; P)$ and $d_{P_{\sigma}, k}(\cdot; P)$ are clearly $U$-statistic of order $p+1$ with symmetric kernels for the estimation of $d_{\triangle_\sigma}(\cdot; P)$ and $d_{P_{\sigma}}(\cdot; P)$, respectively. Moreover the class of functions indexing each of them are collections of indicators of a VC-class of sets (i.e.\ simplices in $\mathbb{R}^{p}$). 
For the almost sure uniform convergence of $d_{\triangle_\sigma,n}(\cdot,P)$ to $d_{\triangle_\sigma}(\cdot,P),$ observe that the only difference with the classical simplicial depth is the rescaling of the simplices. Therefore,  \citet[Corollary 6.7]{Arcones-1993} holds. The result follows from Corollary 3.3 therein. 
Due to $d_{P_{\sigma},k(n,p)}(\cdot,P)=d_{S,k(n,p)}(\cdot,P_{\sigma})$ is the classical sample simplicial depth based on $k$ independent random draws with distribution $P_{\sigma},$ its almost sure uniform convergence to $d_{P_{\sigma}}(\cdot,P)$ follows from  \citet[Corollary 6.8]{Arcones-1993}. 
\end{proof}
\vspace{.5cm}

\begin{proof}[Corollary \ref{convergence:of:the:sample:median}]
The $\sigma$-simplicial depths $d_{\triangle_\sigma}(\cdot,P)$ and $d_{P_{\sigma}}(\cdot,P)$ are upper semicontinuous and vanish at infinity because of Theorem \ref{PropertiesS1} and \ref{PropertiesS2}, respectively. Furthermore, Theorem \ref{uniform:consistency} implies that, for either of the two cases of $(d,d_n)$, $d_n(\cdot;P)$ converges uniformly almost surely to $d(\cdot;P)$. Therefore, the proof of the almost sure convergence of $\mu_n$ to $\mu$ is analogous to that of \citet[Theorem 6.9]{Arcones-1993}.
\end{proof}
\vspace{.5cm}

\begin{proof}[Theorem \ref{CLT}]
It follows from \citet[Theorem 4.9]{Arcones-1993}. In particular, $g_x(z) = \int k_x(x_1, \dots, x_p, z) \, \d Q(x_1) \dots \d Q(x_p),$ where $k_x$ is the kernel of the corresponding U-statistic for (i) and (ii).
\end{proof}

\section{Further simulations and results}\label{Asim}

As commented in Section \ref{Simulations},  we represent in Figure \ref{boxplots:sigma:simplicial:version:1:out}  the misclassification rates of the outsiders for the sample distribution enlarged $\sigma$-simplicial depth, $d_{P_\sigma,k},$ whose performance is slightly worse than that of the simplex enlarged. 
\begin{figure}[!h]
	\includegraphics[width=\linewidth]{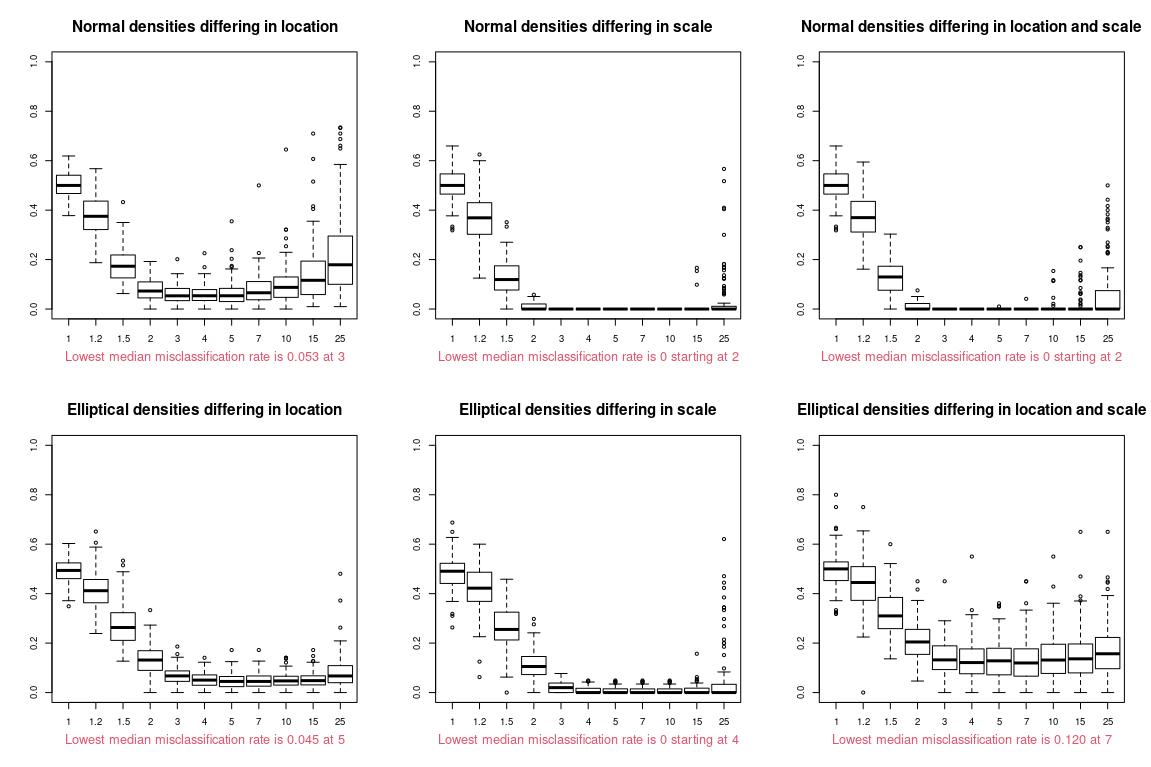}
	\caption{Boxplots of  $100$ misclassification rates of the outsiders for $d_{P_\sigma,k}$ with $\sigma\in\{1.2, 1.5, 2, 3, 4, 5, 7, 10, 15, 25\}$ and the sample simplicial depth ($\sigma=1$). Linear DD-plot classifier.}
 	\label{boxplots:sigma:simplicial:version:1:out}
\end{figure}

In this section we also report the misclassification rates for the whole sample and  all depth functions. For the whole sample, the $\sigma$-simplicial depths and the refined halfspace depth perform well. On the contrary, the illumination depth shows a bad performance when the difference is in scale. See Figures \ref{TtriangleT}, \ref{figure:refined:halfspace}, \ref{figure:illumination_depth}, and \ref{BskT}. 
\begin{figure}[!h]
\centering
\includegraphics[width=\linewidth,height=.5\linewidth]{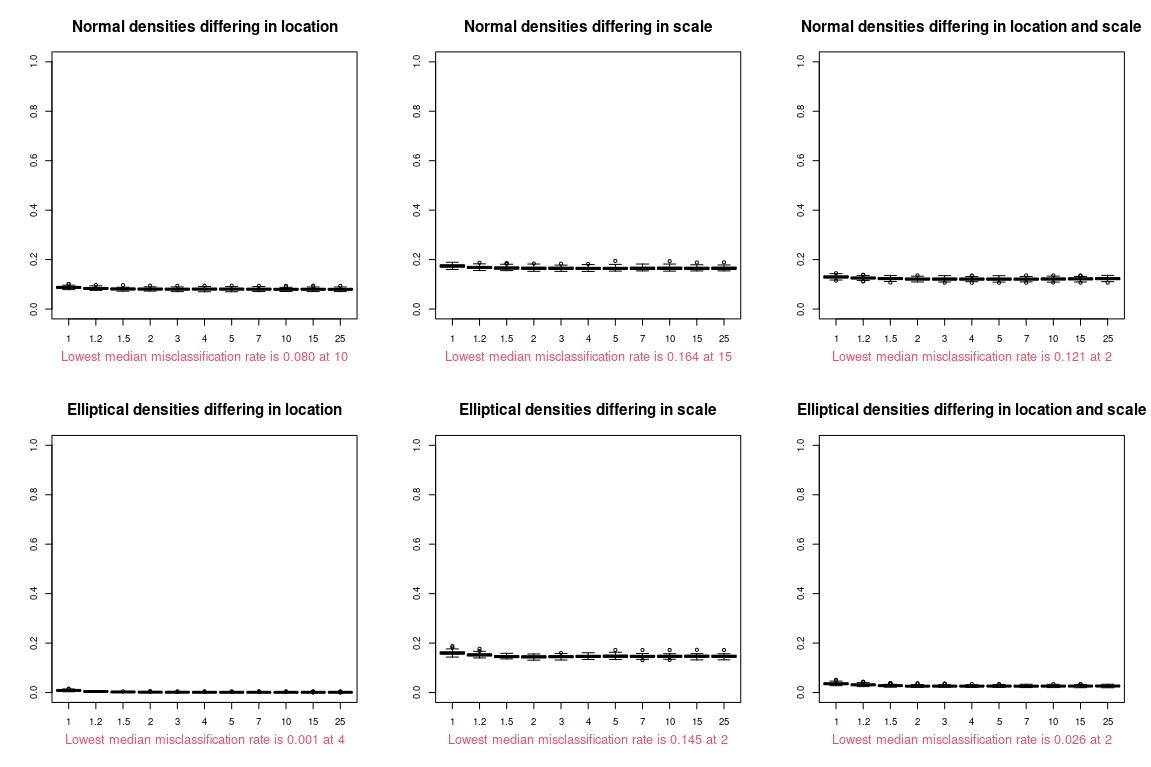}
\caption{\label{TtriangleT} Boxplots of $100$ misclassification rates of the whole sample for $d_{\triangle_{\sigma},n}$ with $\sigma\in\{1.2, 1.5, 2, 3, 4, 5, 7, 10, 15, 25 \}$ and the sample simplicial depth ($\sigma=1$). Linear DD-plot classifier.
}
\end{figure}
Considering the performance for the outsiders and  the whole sample in all scenarios, we suggest the use of $d_{\triangle_{\sigma},n}.$ $d_{P_\sigma,k}$ is a good competitor for large sample sizes, although in this case the choice of $\sigma$ is more important. 

\begin{figure}[!h]
  \centering
  \includegraphics[width=\linewidth,height=.5\linewidth]{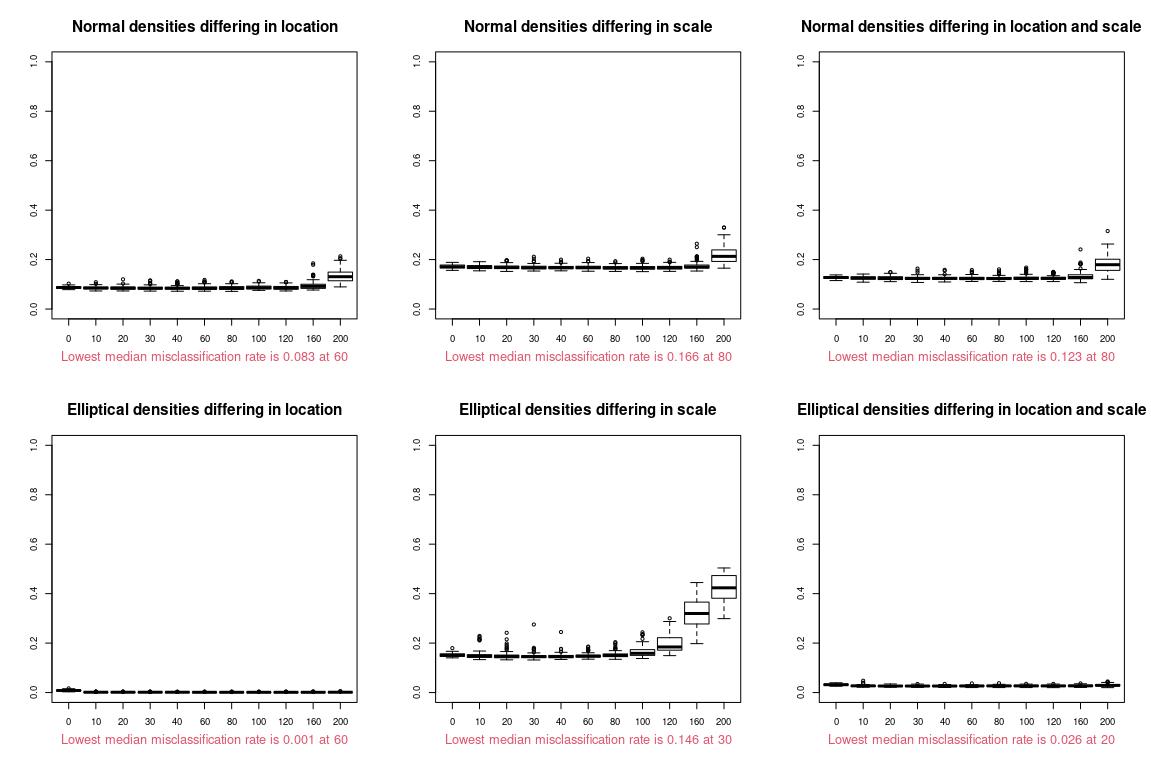}
  \caption{\label{figure:refined:halfspace} Boxplots of $100$ misclassification rates of the whole sample for the refined halfspace depth with $r\in\{10, 20, 30, 40, 60, 80, 100, 120, 160, 200\}$ and the sample halfspace depth ($r=0$). Linear DD-plot classifier.}
\end{figure}

 Furthermore, we compute the misclassification rates using $k$-NN, LDA and QDA for both the whole sample and the outsiders. Figures \ref{figure:knn_all} and \ref{figure:knn_out} show the corresponding boxplots of misclassification rates for $k$-NN across different values of $k,$ from one to ten. The median misclassification rates for LDA and QDA are reported in Table \ref{table:misclassification_rates_whole_sample} for the whole sample and Table \ref{table:misclassification_rates_outsiders} for the outsiders. These tables also contain the misclassification  rates for the other methodologies; in particular, the lowest misclassification rates across the parameter values. The best results for each distribution are highlighted in bold. Since the distributions considered are elliptically symmetric, QDA performs very well and it can be considered as a benchmark. 
 
 We consider an additional classification setting, involving random variables distributed around the unit circle. Specifically, in the framework of {\it Simulation 1}, we take $X_{1}^{(1)} \stackrel{d}{=} Z_{1}/\sqrt{2}$, where $Z_{1}$ is the standard normal bivariate distribution, and $X_{1}^{(2)} \stackrel{d}{=} (1+Y_{1}) \cdot (\cos(U_{1}), \sin(U_{1}))^{\top}$, where $Y_{1}$ is exponentially distributed with mean $10^{-4}$ and $U_{1}$ is uniformly distributed over the interval $(0,2\pi)$ and independent of $Y_{1}$. We report the median misclassification rates for the whole sample and the outsiders in the last column of Tables \ref{table:misclassification_rates_whole_sample} and \ref{table:misclassification_rates_outsiders}, respectively. In this setting, LDA and QDA perform poorly. The lowest misclassification rates for whole samples are obtained using $k$-NN, followed by $d_{\triangle_{\sigma},n}$, $d_{P_\sigma,k}$, and refined halfspace depth. $d_{\triangle_{\sigma},n}$, refined halfspace depth, illumination depth and $k$-NN all perfectly classify the outsiders.

\begin{figure}
  \centering
  \includegraphics[width=\linewidth,height=.5\linewidth]{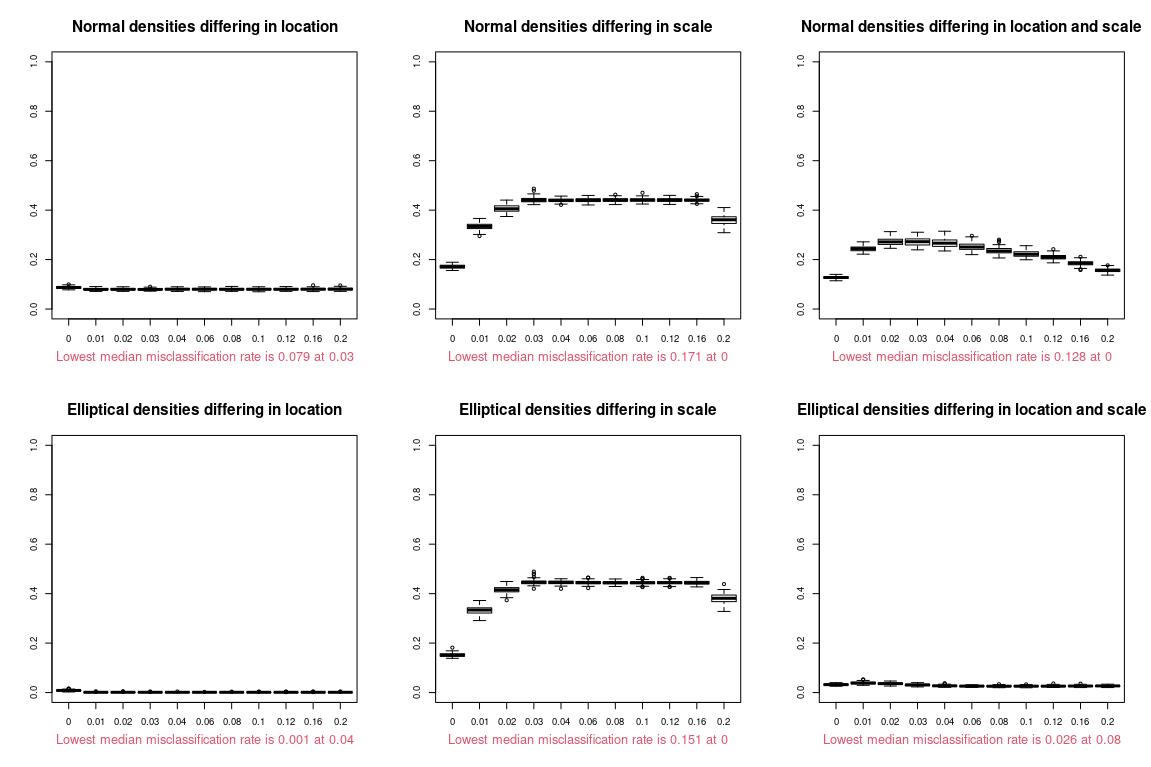}
\caption{\label{figure:illumination_depth} Boxplots of $100$ misclassification rates of the whole sample for the sample illumination depth with $\alpha \in \{0.01, 0.02, 0.03, 0.04, 0.06, 0.08, 0.10, 0.12, 0.16, 0.20 \}$ and the sample halfspace depth ($\alpha=0$). Linear DD-plot classifier.}
\end{figure}

\begin{figure}
  \centering
\includegraphics[width=\linewidth,height=.5\linewidth]{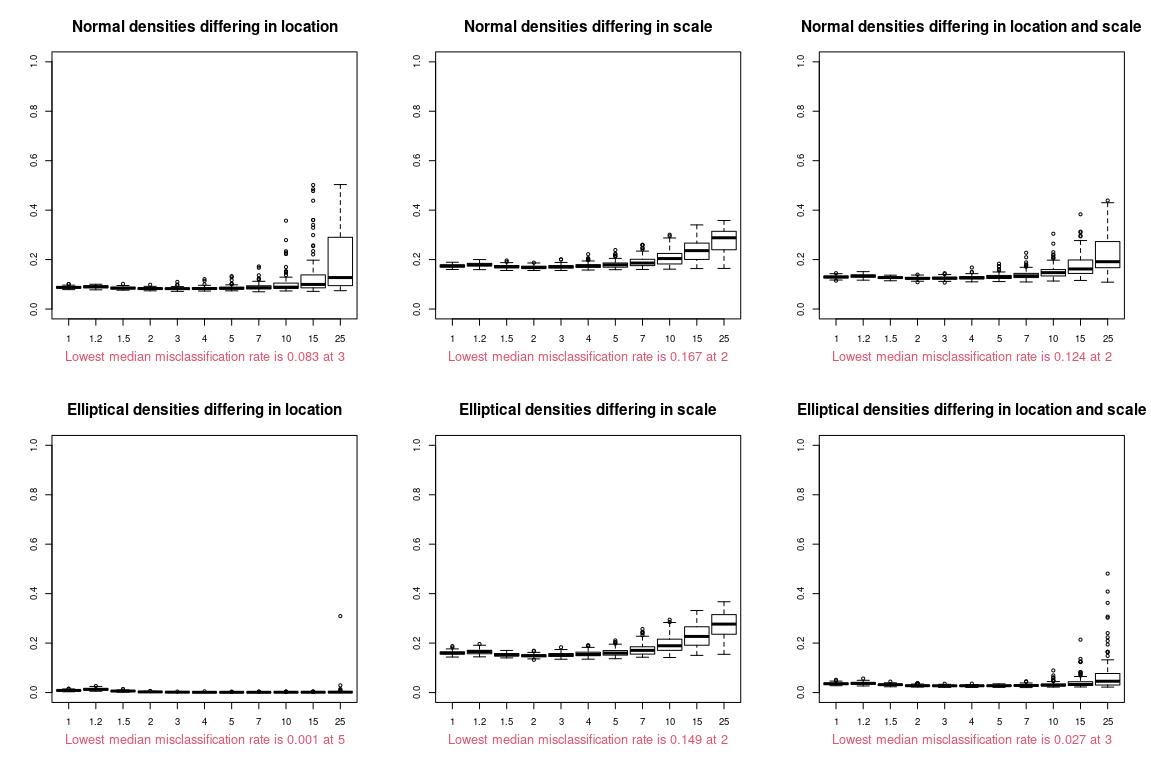}
\caption{\label{BskT} Boxplots of $100$ misclassification rates of the whole sample for $d_{P_\sigma,k}(\cdot,\cdot)$ with $\sigma\in\{1.2, 1.5, 2, 3, 4, 5, 7, 10, 15, 25\}$ and the sample simplicial depth ($\sigma=1$). Linear DD-plot classifier.}
\end{figure}

\begin{figure}
  \centering
  \includegraphics[width=\linewidth,height=.5\linewidth]{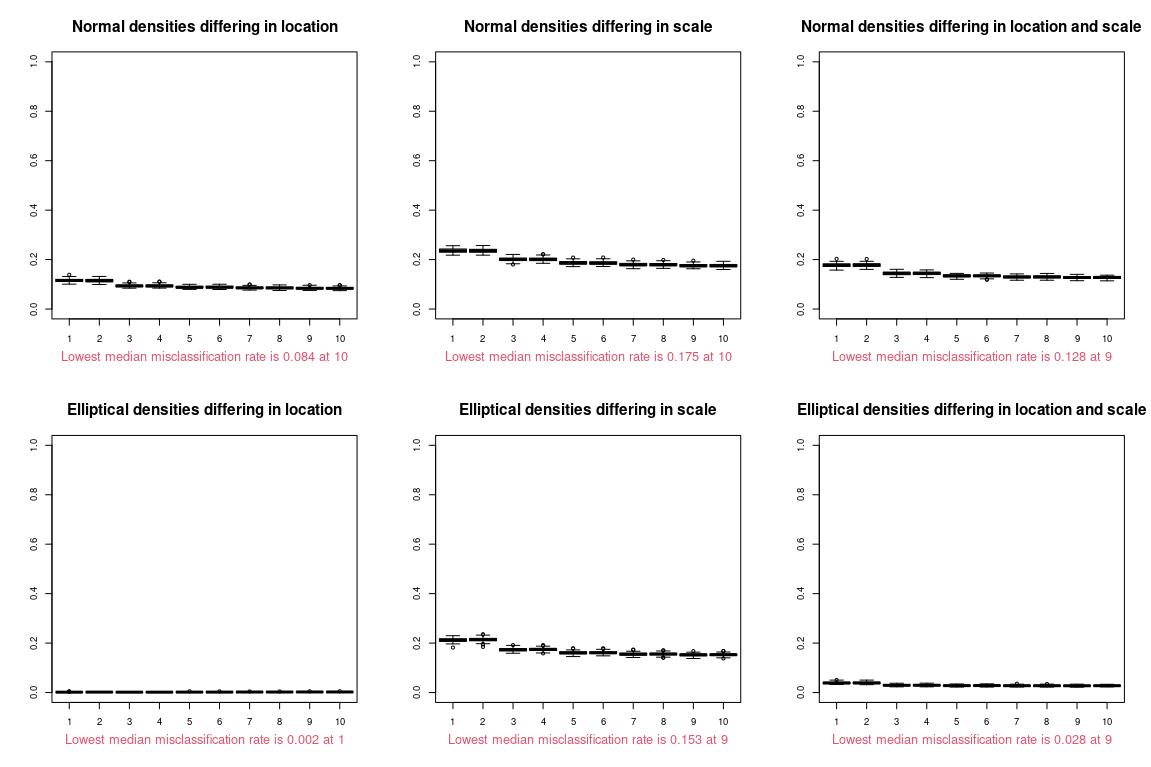}
\caption{\label{figure:knn_all} Boxplots of $100$ misclassification rates of the whole sample for $k$-NN with $k \in \{1, 2, 3, 4, 5, 6, 7, 8, 9, 10 \}$.}
\end{figure}

\begin{figure}
  \centering
  \includegraphics[width=\linewidth,height=.5\linewidth]{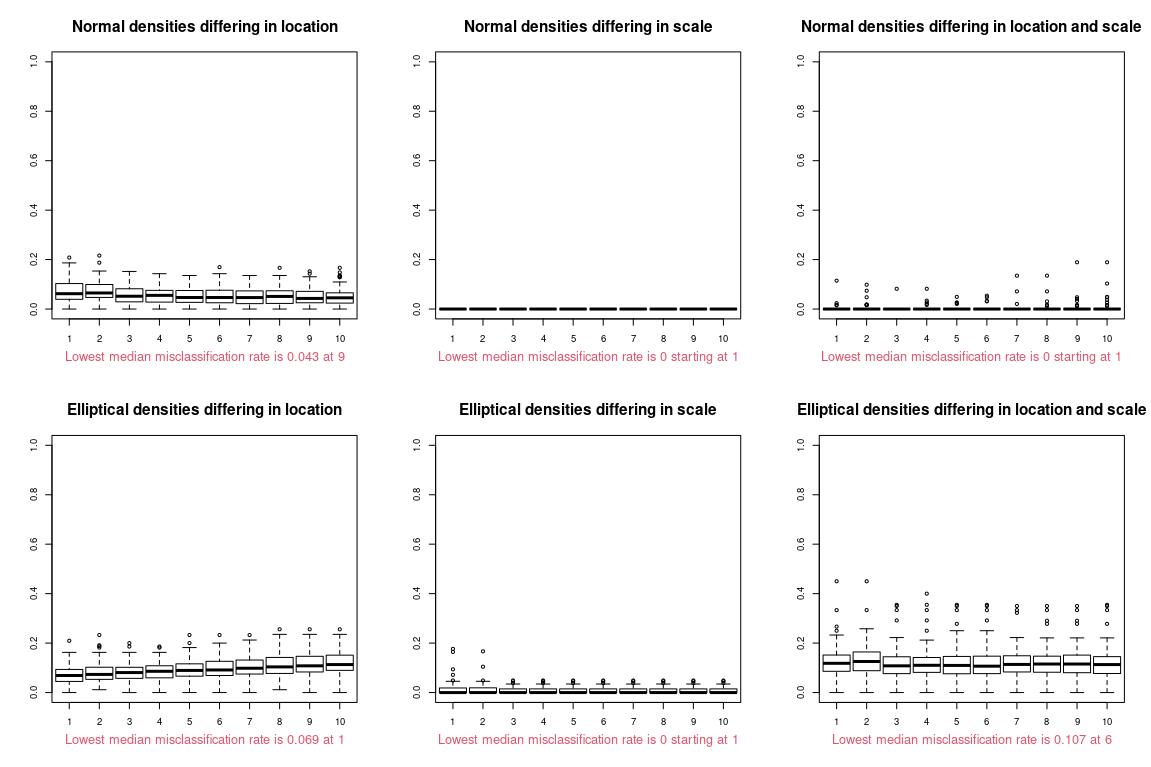}
\caption{\label{figure:knn_out} Boxplots of $100$ misclassification rates of the outsiders for $k$-NN with $k \in \{1, 2, 3, 4, 5, 6, 7, 8, 9, 10 \}$.}
\end{figure}

\begin{table}
  \caption{\label{table:misclassification_rates_whole_sample} Lowest median misclassification rate, across parameters, of the whole sample for $d_{\triangle_{\sigma},n}$, refined halfspace depth, illumination depth, $d_{P_\sigma,k}$, and $k$-NN. In the last two rows, the median misclassification rate of the whole sample for LDA and QDA, respectively.}
\vspace{0.5cm}  
 \centering
\small\addtolength{\tabcolsep}{-3pt}
\begin{tabular}{ p{2.2cm} p{1.5cm} p{1.5cm} p{1.5cm}p{.6cm}  p{1.5cm} p{1.5cm} p{1.5cm} p{.6cm} p{1.5cm} }  
& \multicolumn{3}{c}{ Bivariate normal} &&  \multicolumn{3}{c}{ Bivariate elliptical} & & Circular \\
& location & scale & location \& scale && location & scale & location \& scale & &\\
\hline
$d_{\triangle_{\sigma},n}$ & 0.080 & 0.164 & 0.121 && {\bf{0.001}} & 0.145 & 0.026 && 0.193 \\
ref.\ halfspace & 0.083 & 0.166 & 0.123 && {\bf{0.001}} & 0.146 & 0.026 && 0.343 \\
illumination & 0.079 & 0.171 & 0.128 && {\bf{0.001}} & 0.151 & 0.026 && 0.465 \\
$d_{P_\sigma,k}$ & 0.083 & 0.167 & 0.124 && {\bf{0.001}} & 0.149 & 0.027 & &0.284 \\
$k$-NN & 0.084 & 0.175 & 0.128 && 0.002 & 0.153 & 0.028 && {\bf{0.054}} \\
LDA & {\bf{0.078}} & 0.489 & 0.200 && {\bf{0.001}} & 0.491 & 0.056 && 0.498 \\
QDA & {\bf{0.078}} & {\bf{0.161}} & {\bf{0.118}} && {\bf{0.001}} & {\bf{0.144}} & {\bf{0.025}} && 0.504
\end{tabular}
\end{table}

\begin{table}
  \caption{\label{table:misclassification_rates_outsiders} Lowest median misclassification rate, across parameters, of the outsiders for $d_{\triangle_{\sigma},n}$, refined halfspace depth, illumination depth, $d_{P_\sigma,k}$, and $k$-NN. In the last two rows, the median misclassification rate of the outsiders for LDA and QDA, respectively.}
\vspace{0.5cm}  
 \centering
\small\addtolength{\tabcolsep}{-3pt}
\begin{tabular}{ p{2.2cm}  p{1.5cm}  p{1.5cm}  p{1.5cm} p{.6cm}  p{1.5cm}  p{1.5cm}  p{1.5cm}  p{.6cm} p{1.5cm} }
& \multicolumn{3}{c}{ Bivariate normal} & & \multicolumn{3}{c}{ Bivariate elliptical} & &Circular \\
& location & scale & location \& scale & &location & scale & location \& scale & &\\
\hline
$d_{\triangle_{\sigma},n}$ & {\bf{0.038}} & {\bf{0}} & {\bf{0}}& & 0.038 & {\bf{0}} & 0.103 & &{\bf{0}} \\
ref.\ halfspace & 0.191 & {\bf{0}} & {\bf{0}}& & 0.042 & {\bf{0}}&  0.107 && {\bf{0}} \\
illumination & {\bf{0.038}} & {\bf{0}} & {\bf{0}}& & 0.039 & {\bf{0}} & {\bf{0.098}} & &{\bf{0}} \\
$d_{P_\sigma,k}$ & 0.053 & {\bf{0}} & {\bf{0}}& & 0.045 & {\bf{0}}&  0.120 & &0.050 \\
$k$-NN & 0.043 & {\bf{0}} & {\bf{0}} & &0.069 & {\bf{0}} & 0.107 & &{\bf{0}} \\
LDA & {\bf{0.038}} & 0.551 & 0.423 && {\bf{0.029}} & 0.523 & 0.241 & &0.538 \\
QDA & {\bf{0.038}} & {\bf{0}} & {\bf{0}} && 0.032 & {\bf{0}} & 0.108 & &0.564 \\
\end{tabular}
\end{table}

\end{document}